\newtheorem{theorem}{Theorem}[section]
\newtheorem{lemma}[theorem]{Lemma}
\newtheorem{proposition}[theorem]{Proposition}
\theoremstyle{definition}
\theoremstyle{remark}
\newtheorem{remark}[theorem]{Remark}
\numberwithin{equation}{section}
\newcommand{\ba}{\begin{array}}
\newcommand{\ea}{\end{array}}
\newcommand{\f}{\frac}
\newcommand{\la}{\lambda}
\newcommand{\ds}{\displaystyle}
\begin{document}
\date{}
\title{ \bf\large{Global dynamics of the diffusive Lotka-Volterra competition model with stage structure}\thanks{S. Chen is supported by National Natural Science Foundation of China (No 11771109) and a grant from China Scholarship Council, and J. Shi is supported by US-NSF grant DMS-1715651.}}
\author{ Shanshan Chen\textsuperscript{1}\footnote{Corresponding Author, Email: chenss@hit.edu.cn}\, \, Junping Shi\textsuperscript{2}\footnote{Email: jxshix@wm.edu}\ \
 \\
 {\small \textsuperscript{1} Department of Mathematics, Harbin Institute of Technology,\hfill{\ }}\\
\ \ {\small Weihai, Shandong, 264209, P.R.China.\hfill{\ }}\\
{\small \textsuperscript{2} Department of Mathematics, College of William and Mary,\hfill{\ }}\\
\ \ {\small Williamsburg, Virginia, 23187-8795, USA.\hfill {\ }}\\
}
\maketitle

\begin{abstract}
{The global asymptotic behavior of the classical diffusive Lotka-Volterra competition model with stage structure is studied. A complete classification of the global dynamics is given for the weak competition case. It is shown that under otherwise same conditions, the species with shorter maturation time prevails. The method is also applied to the global dynamics of another delayed competition models.
}

\noindent {\bf{Keywords}}: Reaction-diffusion; Lotka-Volterra Competition model; \\ Global stability; Maturation delay.\\
\noindent {\bf {MSC 2010}}: 35K57, 35K51, 37N25, 92D25
\end{abstract}

\section{Introduction}

The competition for natural resource regulates the growth of biological populations, and it leads to density dependent and bounded population growth. Moreover two species competing for the same limiting resource often cannot coexist, which is the phenomenon of competition exclusion \cite{Gause,Tilman}.
Lotka-Volterra model has been used to describe the competition for resource, and it predicts the competition exclusion to occur in the weak competition case \cite{Lotka,Volterra}. On the other hand, spatial heterogeneity of the environment can change or determine the outcome of the competition, and the dynamical behaviors of spatially explicit mathematical models could explain, to certain extent, the ecological complexity of ecosystems \cite{Levin}.

One of the prototypical mathematical models to describe competition for resource in  spatially heterogeneous environment  is the following diffusive Lotka-Volterra competition system:
\begin{equation}\label{lv}
\begin{cases}
  \ds\frac{\partial U}{\partial t}=d_1\Delta U+U\left(m_1(x)-U-cV\right), & x\in \Omega,\; t>0,\\
 \ds\frac{\partial V}{\partial t}=d_2\Delta V+V\left(m_2(x)-bU-V\right), & x\in\Omega,\; t>0,\\
\ds\f{\partial U}{\partial n}=\ds\f{\partial V}{\partial n}=0,& x\in\partial \Omega,\ t>0,\\
U(x,0)=U_0(x)\ge0,\;V(x,0)=V_0(x)\ge0,&x\in\Omega.
\end{cases}
\end{equation}
Here $U(x,t)$ and $V(x,t)$ are the population densities of two competing species at location $x$ and time $t$ respectively; $\Omega$ is a bounded domain in $\mathbb{R}^N$ with a smooth boundary $\partial \Omega$, and $n$ is the outward unit normal vector on $\partial \Omega$; $d_1,d_2>0$ are the diffusion coefficients of species $U$ and $V$, respectively; 
the functions $m_1(x)$ and $m_2(x)$ represent the intrinsic growth rates of species $U$ and $V$ at location $x$ respectively, and they can also be interpreted as resource available to $U$ and $V$; and the parameters $b,c>0$ account for the inter-specific competition. 
The no-flux boundary conditions are imposed on the boundary, which means that the habitat is closed and individuals cannot move in or out through the boundary.

It is well known that  that system \eqref{lv} has only two semitrivial steady states $\left(\theta_{d_1,m_1},0\right)$ and $\left(0,\theta_{d_2,m_2}\right)$ (see \cite{CantrellB})
under the following assumption:
\begin{enumerate}
\item[$\mathbf{(M^+)}$] $m_i(x)\in C^{\alpha}(\overline \Omega)$, for $\alpha\in(0,1)$, and $m_i(x)>0$ on $\overline\Omega$ for $i=1,2$.
\end{enumerate}
 For the special case that $m_1(x)=m_2(x)=m(x)>0(\not\equiv const)$,
the results on model \eqref{lv} could be summarized as follows.
If $b=c=1$, Dockey et al. \cite{Dockery} showed that
the semitrivial steady state $\left(\theta_{d_1,m_1},0\right)$ is globally asymptotically stable if $d_1<d_2$. That is,
\lq\lq the slower diffuser always wins\rq\rq. If $b,c<1$ (the weak competition case), Lou \cite{Lou} showed that in a parameter region of  $(b,c)$,   the semitrivial steady state $(\theta_{d_1,m_1},0)$ is globally asymptotically stable if it is linearly stable. Lam and Ni \cite{Lam}  showed that in a more genreal parameter region of $(b,c)$, either $(\theta_{d_1,m_1},0)$ is globally asymptotically stable, or \eqref{lv} has a unique coexistence steady state which is globally asymptotically stable.
Finally He and Ni \cite{HeNi2016-1}   gave a complete classification on the global dynamics of model \eqref{lv} for the parameter region satisfying $0<bc\le1$ and all $d_1,d_2>0$: either one of the two semitrivial steady states is globally asymptotically stable, or there exists a unique positive steady state which is globally asymptotically stable, or there exists a compact global attractor which consists of a continuum of steady states. Their results also hold for the case that $m_1(x)\not\equiv m_2(x)$ (see Section 2 for more precise results).
We remark that the results on the dynamics of model \eqref{lv} could also be found in \cite{HeNi2013-1,HeNi2013-2,HeNi2016-2,HeNi2017}, and see \cite{LouY2014,ZhouLX2016,LouZhaoZhou2019,LouY2015,ZhouP2017,ZhouP2016,ZhouP2019,ZhouPZ2019} for the dynamics of competition models in the advective environment.

For some biological species, the time from the birth to maturation may have important effect on the population dynamics, and it should be included in the modeling process.
Considering the maturation time of species $U$ and $V$, we propose the following
diffusive Lotka-Volterra competition model with time-delays:
\begin{equation}\label{lvs}
\begin{cases}
  \ds\frac{\partial U}{\partial t}=d_1\Delta U+e^{-\gamma_1\tau_1}m_1(x)U(x,t-\tau_1)-U^2-cUV, & x\in \Omega,\; t>0,\\
 \ds\frac{\partial V}{\partial t}=d_2\Delta V+e^{-\gamma_2\tau_2} m_2(x)V(x,t-\tau_2)-bUV-V^2, & x\in\Omega,\; t>0,\\
\ds\f{\partial U}{\partial n}=\ds\f{\partial V}{\partial n}=0,& x\in\partial \Omega,\ t>0,\\
U(x,t)=U_0(x,t)\ge0, &x\in\Omega,\;t\in [-\tau_1,0],\\
V(x,t)=V_0(x,t)\ge0,&x\in\Omega,\;t\in [-\tau_2,0].
\end{cases}
\end{equation}
Here $\tau_1$ and $\tau_2$ represent the maturation periods of $U$ and $V$, respectively, $\gamma_1$ and $\gamma_2$ represent the death rates of the immature species of $U$ and $V$, respectively, and other parameters have the same meanings as those in model \eqref{lv}. We remark that if $\tau_1=\tau_2=0$, then model \eqref{lvs} is reduced to \eqref{lv}.
Indeed in \cite{Omari,Al-Omari2003}, a similar model was constructed for $\Omega=(-\infty,\infty)$ and $m_i(x)$ are constant for $i=1,2$, and they studied the existence of the traveling wave front solutions.

The derivation of model \eqref{lvs} starts from the standard age-structured population model (see \cite{Omari,Metz,So2001}), and the details for the unbounded domain could be found in \cite{Omari}. Here we include it for the sake of completeness.
Let $u(x,t,a)$ be the density of a species of age $a$ at space $x$ and time $t$, and $\tau$ be the maturation period. Assume that
$u$ satisfies the age-structured population model:
\begin{equation*}
\begin{cases}
\ds\f{\partial u}{\partial t}+\ds\f{\partial u}{\partial a}=\tilde d \f{\partial^2 u}{\partial x^2}-\gamma u,&x\in\Omega,\;t>0,\;0<a<\tau,\\
\ds\f{\partial u}{\partial n}=0,&x\in\partial\Omega,\;t>0,\;0<a<\tau,\\
\end{cases}
\end{equation*}
and the mature species
$u_m(x,t):=\ds\int_{\tau}^\infty u(x,t,a)da$
satisfies
\begin{equation*}
\begin{cases}
\ds\f{\partial u_m}{\partial t}=d\ds\f{\partial ^2 u_m}{\partial x^2}+u(x,t,\tau)-u_m^2, &x\in\Omega,\;t>0,\\
\ds\f{\partial u_m}{\partial n}=0,&x\in\partial\Omega,\;t>0,
\end{cases}
\end{equation*}
with $u(x,t,0)=m(x)u_m(x,t)$. Here $d$ and $\tilde d$ are the diffusion coefficients of the mature and immature species, respectively, $\gamma$ is the mortality rate of the immature species, $m(x)$ is the intrinsic growth rate of the mature species at space $x$, and $u(x,t,\tau)$ is the mature adult recruitment term. Then
$$u(x,t,\tau)=e^{-\gamma\tau}\int_\Omega G(x,y,\tilde d,\tau) m(y)u_m(y,t-\tau)dy,$$
where the Green's function $G(x,y,\tilde d,t)$
satisfies
\begin{equation*}
\begin{cases}
\ds\f{\partial G}{\partial t}=\tilde d \ds\f{\partial^2 G}{\partial y^2},\;\;\;&y\in\Omega,\;t>0,\\
\ds\f{\partial G}{\partial n}=0,\;\;\;&y\in\partial\Omega,\;t>0,\\
G(x,y,\tilde d,0)=\delta (x-y),\;\;\;&y\in \Omega.
\end{cases}
\end{equation*}
For one dimensional domain $\Omega=(0,L)$, one can calculate that
\begin{equation}\label{oned}
G(x,y,\tilde d,t)=\ds\f{1}{L}+\ds\f{2}{L}\sum_{n=1}^\infty e^{-\f{n^2\pi^2}{L^2}\tilde d t}\cos\ds\f{n\pi x}{L}\cos\ds\f{n\pi y}{L}.
\end{equation}
Consequently, the mature species $u_m(x,t)$ satisfies
\begin{equation*}
\begin{cases}
\ds\f{\partial u_m}{\partial t}=d\ds\f{\partial ^2 u_m}{\partial x^2}+e^{-\gamma\tau}\int_\Omega G(x,y,\tilde d,\tau) m(y)u_m(y,t-\tau)dy-u_m^2, &x\in\Omega,\;t>0,\\
\ds\f{\partial u_m}{\partial n}=0,&x\in\partial\Omega,\;t>0.
\end{cases}
\end{equation*}
Then for two competing species $U$ and $V$, one could obtain the following two species competing model with age structure£º
\begin{equation}\label{lvsnonlocal}
\begin{cases}
  \ds\frac{\partial U}{\partial t}=d_1\Delta U+e^{-\gamma_1\tau_1}\int_\Omega G(x,y,\tilde d_1,\tau_1) m_1(y)U(y,t-\tau_1)dy &\\
  \hspace{0.4in}-U^2-cUV, \; &x\in \Omega,\; t>0,\\
 \ds\frac{\partial V}{\partial t}=d_2\Delta V+e^{-\gamma_2\tau_2}\int_\Omega G(x,y,\tilde d_2,\tau_2) m_2(y)V(y,t-\tau_2)dy &\\
 \hspace{0.4in}-bUV-V^2, \; &x\in\Omega,\; t>0,\\
\ds\f{\partial U}{\partial n}=\ds\f{\partial V}{\partial n}=0,\;\; &x\in\partial \Omega,\ t>0,\\
U(x,t)=U_0(x,t)\ge0, \;&x\in\Omega,\;t\in [-\tau_1,0],\\
V(x,t)=V_0(x,t)\ge0, \;&x\in\Omega,\;t\in [-\tau_2,0].
\end{cases}
\end{equation}
Note that $G(x,y,\tilde d,\tau)=\delta(x-y)$ for $\tilde d=0$, see Eq. \eqref{oned} for the one dimensional case. Therefore, model \eqref{lvsnonlocal} can be approximated by model \eqref{lvs} if the diffusion rates of the immature species of $U$ and $V$ are small ($\tilde d_1$ and $\tilde d_2$ are small).

In \cite{GuoY2018}, Yan and Guo considered the dynamics of the competition model with stage structure and spatial heterogeneity, and investigated model \eqref{lvs} for the case of $\tau_1>0$ and $\tau_2=0$. They showed that one of the semitrivial steady states can be globally asymptotically stable under certain conditions, and the global stability of the positive steady state could be obtained if there exists a pair of upper and lower solutions.
In this paper, we show that, for $0<bc\le1$, the global dynamics of model \eqref{lvs} can be completely classified as the non-delay case \cite{HeNi2016-1}: either one of the two semitrivial steady states is globally asymptotically stable, or there exists a unique positive steady state which is globally asymptotically stable, or there exists a compact global attractor which consists of a continuum of steady states.

The rest of the paper is organized as follows.
In Section 2, we give some preliminaries. In Section 3, we obtain the global dynamics of model \eqref{lvs} for $0<bc\le1$.
In Section 4, we apply the obtained results in Section 3 to two concrete examples and show the effect of time delays.
Moreover, we find that the method for model \eqref{lvs} can also be applied to another delayed competition model.
Throughout the paper, we denote
\begin{equation*}
\Gamma=\{(d_1,d_2,\tau_1,\tau_2,\gamma_1,\gamma_2)\in\left(\mathbb{R}\right)^6:d_1,d_2>0,\tau_1,\tau_2,\gamma_1,\gamma_2\ge0\},
\end{equation*}
$Y=C(\overline \Omega,\mathbb{R})$, $E_i=C([-\tau_i,0],Y)(i=1,2)$, and $E=E_1\times E_2$. Here $E_i=Y$ if $\tau_i=0(i=1,2)$.
Moreover, we denote $\mathbb{R}^+=\{x\in\mathbb{R}:x\ge0\}$, $Y^+=C(\overline \Omega,\mathbb{R}^+)$, $E_i^+=C([-\tau_i,0],Y^+)(i=1,2)$,
and $E^+=E_1^+\times E_2^+$.

\section{Some preliminaries}
In this section, we summarize some existing results in \cite{HeNi2016-1} for the following model:
\begin{equation}\label{lvs0}
\begin{cases}
  \ds\frac{\partial U}{\partial t}=d_1\Delta U+e^{-\gamma_1\tau_1}m_1(x)U-U^2-cUV, & x\in \Omega,\; t>0,\\
 \ds\frac{\partial V}{\partial t}=d_2\Delta V+e^{-\gamma_2\tau_2} m_2(x)V-bUV-V^2, & x\in\Omega,\; t>0,\\
\ds\f{\partial U}{\partial n}=\ds\f{\partial V}{\partial n}=0,& x\in\partial \Omega,\ t>0,\\
U(x,0)=U_0(x)\ge0,\;V(x,0)=V_0(x)\ge0,&x\in\Omega.
\end{cases}
\end{equation}
Clearly, under assumption $\mathbf{(M^+)}$, system \eqref{lvs0} has two semitrivial steady states $$\left(\theta_{d_1,\tau_1,\gamma_1,m_1},0\right)\;\; \text{and}\;\; \left(0,\theta_{d_2,\tau_2,\gamma_2,m_2}\right),$$
where $\theta_{d_i,\tau_i,\gamma_i,m_i}$ satisfies the  equation
\begin{equation}\label{theta}
\begin{cases}
 d_i\Delta \theta+e^{-\gamma_i\tau_i} m_i(x)\theta-\theta^2=0, & x\in \Omega,\\
\ds\f{\partial \theta}{\partial n}=0,& x\in\partial \Omega.
\end{cases}
\end{equation}
Denote by $\mu_1(d,w)$ the principal eigenvalue of the eigenvalue problem
\begin{equation}\label{seigen}
\begin{cases}
 d\Delta \phi+w(x)\phi=\mu\phi, & x\in \Omega,\\
\ds\f{\partial \phi}{\partial n}=0,& x\in\partial \Omega.\\
\end{cases}
\end{equation}
Then $\left(\theta_{d_1,\tau_1,\gamma_1,m_1},0\right)$ is linearly stable with respect to \eqref{lvs0} if
\begin{equation*}
\mu_1\left(d_2,e^{-\gamma_2\tau_2}m_2-b\theta_{d_1,\tau_1,\gamma_1,m_1}\right)<0,
\end{equation*}
is linearly unstable if
$$
\mu_1\left(d_2,e^{-\gamma_2\tau_2}m_2-b\theta_{d_1,\tau_1,\gamma_1,m_1}\right)>0,
$$
and is neutrally stable
if
$$
\mu_1\left(d_2,e^{-\gamma_2\tau_2}m_2-b\theta_{d_1,\tau_1,\gamma_1,m_1}\right)=0.
$$
Similarly, the linear stability of $\left(0,\theta_{d_2,\tau_2,\gamma_2,m_2}\right)$  with respect to \eqref{lvs0} is also determined by the sign of
$$\mu_1\left(d_1,e^{-\gamma_1\tau_1}m_1-c\theta_{d_2,\tau_2,\gamma_2,m_2}\right).$$
For fixed $b,c>0$, define
\begin{equation}\label{simpo}
\begin{split}
&S_u:=\{(d_1,d_2,\tau_1,\tau_2,\gamma_1,\gamma_2)\in\Gamma:\mu_1\left(d_2,e^{-\gamma_2\tau_2}m_2-b\theta_{d_1,\tau_1,\gamma_1,m_1}\right)<0\},\\
&S_v:=\{(d_1,d_2,\tau_1,\tau_2,\gamma_1,\gamma_2)\in\Gamma:\mu_1\left(d_1,e^{-\gamma_1\tau_1}m_1-c\theta_{d_2,\tau_2,\gamma_2,m_2}\right)<0\},\\
&S_{-}:=\{(d_1,d_2,\tau_1,\tau_2,\gamma_1,\gamma_2)\in\Gamma:\mu_1\left(d_2,e^{-\gamma_2\tau_2}m_2-b\theta_{d_1,\tau_1,\gamma_1,m_1}\right)>0,\\
&~~~~~~~~~~ \text{and}\;\mu_1\left(d_1,e^{-\gamma_1\tau_1}m_1-c\theta_{d_2,\tau_2,\gamma_2,m_2}\right)>0\},\\
&S_{u,0}:=\{(d_1,d_2,\tau_1,\tau_2,\gamma_1,\gamma_2)\in\Gamma:\mu_1\left(d_2,e^{-\gamma_2\tau_2}m_2-b\theta_{d_1,\tau_1,\gamma_1,m_1}\right)=0\},\\
&S_{v,0}:=\{(d_1,d_2,\tau_1,\tau_2,\gamma_1,\gamma_2)\in\Gamma:\mu_1\left(d_1,e^{-\gamma_1\tau_1}m_1-c\theta_{d_2,\tau_2,\gamma_2,m_2}\right)=0\},\\
&S_{0,0}:=\{(d_1,d_2,\tau_1,\tau_2,\gamma_1,\gamma_2)\in\Gamma:\mu_1\left(d_2,e^{-\gamma_2\tau_2}m_2-b\theta_{d_1,\tau_1,\gamma_1,m_1}\right)\\
&~~~~~~~~~~=\mu_1\left(d_1,e^{-\gamma_1\tau_1}m_1-c\theta_{d_2,\tau_2,\gamma_2,m_2}\right)=0\}.
\end{split}
\end{equation}
Then, we cite two main results in \cite{HeNi2016-1} as follows.
\begin{lemma}\label{posss}\cite[page 23]{HeNi2016-1}
Assume that $m_i(x)$ satisfies assumption $\mathbf{(M^+)}$ for $i=1,2$, and $0<bc\le1$. Then
for any $(d_1,d_2,\tau_1,\tau_2,\gamma_1,\gamma_2)\in \Gamma\setminus S_{0,0}$, every positive steady state of system \eqref{lvs0} is linearly stable if exists.
\end{lemma}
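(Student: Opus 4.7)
The plan is to linearize \eqref{lvs0} at an arbitrary positive steady state $(U,V)$ and show that the principal eigenvalue $\lambda_1$ of the linearization is strictly negative whenever $(d_1,d_2,\tau_1,\tau_2,\gamma_1,\gamma_2)\notin S_{0,0}$.

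First I would write down the linearized eigenvalue problem for perturbations $(\phi,\psi)$ and apply the substitution $\psi\mapsto -\psi$. This converts the competitive linearization into a fully coupled cooperative elliptic system with Neumann boundary conditions (full coupling uses $b,c>0$ together with the strict positivity of $U,V$ on $\cOm$). The Krein--Rutman theorem then yields a simple real principal eigenvalue $\lambda_1$ with a componentwise positive eigenpair $(\phi,\psi)$, and since $\lambda_1$ dominates the real parts of all other eigenvalues, linear stability is equivalent to $\lambda_1<0$.

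Next I would exploit the steady-state equations to simplify the diagonal coefficients of the linearization. Dividing $d_1\Delta U+(e^{-\gamma_1\tau_1}m_1-U-cV)U=0$ by $U$ gives $e^{-\gamma_1\tau_1}m_1-2U-cV=-U-d_1\Delta U/U$, and similarly for the second component. Multiplying the $\phi$-equation by $U$ and the $\psi$-equation by $V$ and integrating over $\Om$, the cross-Laplacian terms $\int(U\Delta\phi-\phi\Delta U)$ and $\int(V\Delta\psi-\psi\Delta V)$ vanish by Green's identity (all four functions have Neumann data), producing the two scalar identities
\begin{equation*}
-\int_\Om U^2\phi\,dx+c\int_\Om U^2\psi\,dx=\lambda_1\int_\Om U\phi\,dx,\qquad -\int_\Om V^2\psi\,dx+b\int_\Om V^2\phi\,dx=\lambda_1\int_\Om V\psi\,dx.
\end{equation*}
To obtain a complementary energy estimate I would apply the Liouville substitution $\phi=Uw_1$, $\psi=Vw_2$. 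The linearization rewrites in the self-adjoint divergence form $d_i\nabla\cdot(U_i^2\nabla w_i)$, and testing by $w_1$ and $w_2$ respectively and integrating by parts yields
\begin{equation*}
\lambda_1\!\int(U^2w_1^2+V^2w_2^2)\,dx=-\!\int\!(d_1U^2|\nabla w_1|^2+d_2V^2|\nabla w_2|^2)\,dx-\!\int\!(U^3w_1^2+V^3w_2^2)\,dx+\!\int\! UV(cU+bV)w_1w_2\,dx.
\end{equation*}

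To close the proof I would argue by contradiction. If $\lambda_1\ge 0$, the scalar identities give $c\int U^2\psi\ge\int U^2\phi$ and $b\int V^2\phi\ge\int V^2\psi$, whose product yields $bc\int U^2\psi\int V^2\phi\ge\int U^2\phi\int V^2\psi$. Combining this with a Cauchy--Schwarz estimate of $\int UV(cU+bV)w_1w_2$ against the weighted $L^2$ norms appearing in the energy identity, and inserting $bc\le 1$, should force the reaction and diffusion contributions to balance so that $\lambda_1\le 0$. The equality case $\lambda_1=0$ requires equality throughout Cauchy--Schwarz (forcing $w_1,w_2$ to be constants, i.e., $\phi\propto U$ and $\psi\propto V$), the vanishing of the Dirichlet terms, and $bc=1$; substituting back into the eigenvalue equations then pins the parameters to the set $S_{0,0}$ defined in \eqref{simpo}, which proves strict stability whenever the parameters lie outside $S_{0,0}$.

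I expect the main obstacle to be controlling the reaction integral $\int UV(cU+bV)w_1w_2\,dx$: the pointwise bound $cU+bV\le 2\sqrt{UV}$ (equivalent to $4UV\ge(cU+bV)^2$) fails unless $U/V$ lies in a bounded range depending on $bc$, so the estimate has to be carried out integrally by combining AM--GM with the two scalar identities in a way that uses the full strength of $bc\le 1$. Tracking the equality cases in this chain of inequalities is what ties the borderline scenario $\lambda_1=0$ precisely to the degenerate parameter set $S_{0,0}$.
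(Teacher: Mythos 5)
You should first be aware that the paper offers no proof of this lemma at all: it is quoted directly from \cite{HeNi2016-1} (page 23), so there is no internal argument to compare against, and what you are really attempting is a from-scratch proof of one of the main technical results of that cited paper. Judged on its own terms, your setup is correct and reproduces the standard opening moves of the known proofs (\cite{Lou}, \cite{Lam}, \cite{HeNi2016-1}): the sign flip $\psi\mapsto-\psi$ produces a fully coupled cooperative system, Krein--Rutman gives a simple dominant real eigenvalue $\lambda_1$ with a positive eigenpair, and both of your scalar identities $-\int_\Omega U^2\phi+c\int_\Omega U^2\psi=\lambda_1\int_\Omega U\phi$ and $-\int_\Omega V^2\psi+b\int_\Omega V^2\phi=\lambda_1\int_\Omega V\psi$, as well as the divergence-form energy identity, are correct.

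The gap is that the contradiction from $\lambda_1\ge 0$ is never actually derived, and the two ingredients you propose to combine do not suffice. The product of the two scalar inequalities gives $bc\int_\Omega U^2\psi\int_\Omega V^2\phi\ge\int_\Omega U^2\phi\int_\Omega V^2\psi$, which is not self-contradictory under $bc\le 1$: there is no a priori reason why $\int_\Omega U^2\phi\int_\Omega V^2\psi$ should dominate $\int_\Omega U^2\psi\int_\Omega V^2\phi$ (take $\psi$ weighted toward the region where $U$ is large and $\phi$ toward where $V$ is large). The energy identity would require absorbing $\int_\Omega UV(cU+bV)w_1w_2$ into $\int_\Omega(U^3w_1^2+V^3w_2^2)$, which, as you yourself note, fails pointwise unless $U/V$ has bounded oscillation; and the ``integral'' version of this absorption via Cauchy--Schwarz introduces the new quantities $\int_\Omega U\psi^2$ and $\int_\Omega V\phi^2$ that none of your identities control. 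This missing step is precisely the mathematical content of the lemma: \cite{Lou} could only establish it on a strict sub-region of $\{(b,c):bc<1\}$, \cite{Lam} on a larger one, and covering the full range $0<bc\le 1$ is the central achievement of \cite{HeNi2016-1}, whose argument is substantially more delicate than AM--GM/Cauchy--Schwarz bookkeeping. Your equality-case analysis (tying $\lambda_1=0$ to $\phi\propto U$, $\psi\propto V$, $bc=1$, and hence to $S_{0,0}$) is fine in outline, but it sits downstream of the inequality you have not established. As written, the proposal is an accurate reduction of the problem to its hard core, not a proof.
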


\begin{lemma}\cite[Theorem 1.3]{HeNi2016-1}\label{clarifi}
Assume that $m_i(x)$ satisfies assumption $\mathbf{(M^+)}$ for $i=1,2$, and $0<bc\le1$. Then
we have the following mutually disjoint decomposition of $\Gamma$:
\begin{equation*}
\Gamma=\left(S_u\cup S_{u,0}\setminus S_{0,0}\right)\cup\left(S_v\cup S_{v,0}\setminus S_{0,0}\right)\cup S_{-}\cup S_{0,0}.
\end{equation*}
Moreover, the following statements hold for model \eqref{lvs0}:
\begin{enumerate}
\item [(i)] For any $(d_1,d_2,\tau_1,\tau_2,\gamma_1,\gamma_2)\in (S_u\cup S_{u,0})\setminus S_{0,0}$, $\left(\theta_{d_1,\tau_1,\gamma_1,m_1},0\right)$ is globally asymptotically stable.
\item [(ii)] For any $(d_1,d_2,\tau_1,\tau_2,\gamma_1,\gamma_2)\in (S_v\cup S_{v,0})\setminus S_{0,0}$, $\left(0,\theta_{d_2,\tau_2,\gamma_2,m_2}\right)$ is globally asymptotically stable.
\item [(iii)] For any $(d_1,d_2,\tau_1,\tau_2,\gamma_1,\gamma_2)\in S_{-}$, model \eqref{lvs0} has a unique positive steady state, which is globally asymptotically stable.
\item [(iv)] For any $(d_1,d_2,\tau_1,\tau_2,\gamma_1,\gamma_2)\in S_{0,0}$, $\theta_{d_1,\tau_1,\gamma_1,m_1}\equiv c\theta_{d_2,\tau_2,\gamma_2,m_2}$, and model \eqref{lvs0} has a compact global attractor consisting of a continuum of steady states
    $$\{\left(\rho\theta_{d_1,\tau_1,\gamma_1,m_1},(1-\rho)\theta_{d_1,\tau_1,\gamma_1,m_1}/c\right):\rho\in(0,1)\}.$$
\end{enumerate}
\end{lemma}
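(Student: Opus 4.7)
The plan is to observe that system \eqref{lvs0} is nothing more than the classical diffusive Lotka-Volterra system \eqref{lv} after rescaling the growth rates: if we set
\begin{equation*}
\tilde m_i(x) := e^{-\gamma_i\tau_i} m_i(x), \quad i=1,2,
\end{equation*}
then \eqref{lvs0} coincides with \eqref{lv} with $m_i$ replaced by $\tilde m_i$. Since $\gamma_i,\tau_i\ge 0$ and $m_i$ satisfies $\mathbf{(M^+)}$, the rescaled coefficient $\tilde m_i$ also lies in $C^\alpha(\overline\Omega)$ and is strictly positive on $\overline\Omega$. Hence $\tilde m_i$ satisfies $\mathbf{(M^+)}$, and the hypotheses of the He-Ni classification theorem \cite{HeNi2016-1} are met. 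The whole argument is therefore a translation of parameters.

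Concretely, I would proceed in three steps. First, by comparing \eqref{theta} with the corresponding logistic equation for $\tilde m_i$, verify the identification $\theta_{d_i,\tau_i,\gamma_i,m_i}\equiv \theta_{d_i,\tilde m_i}$, which follows at once by uniqueness of the positive solution of the logistic problem. Second, substitute this identity into the principal eigenvalue quantities appearing in \eqref{simpo}: for instance
\begin{equation*}
\mu_1\bigl(d_2, e^{-\gamma_2\tau_2}m_2 - b\theta_{d_1,\tau_1,\gamma_1,m_1}\bigr) = \mu_1\bigl(d_2, \tilde m_2 - b\theta_{d_1,\tilde m_1}\bigr),
\end{equation*}
so that the sets $S_u, S_v, S_{-}, S_{u,0}, S_{v,0}, S_{0,0}$ correspond exactly to the analogous He-Ni stability regions in the $(d_1,d_2,\tilde m_1,\tilde m_2)$ parametrization. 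Third, invoke \cite[Theorem 1.3]{HeNi2016-1} directly in the $(\tilde m_1,\tilde m_2)$ variables to obtain the disjoint decomposition of $\Gamma$ and conclusions (i)--(iv), including the identity $\theta_{d_1,\tilde m_1}\equiv c\theta_{d_2,\tilde m_2}$ characterizing $S_{0,0}$ and the description of the global attractor.

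The only point that needs a brief check is that He-Ni's classification is stated for possibly distinct coefficients $\tilde m_1\not\equiv\tilde m_2$, not merely for the symmetric case $m_1=m_2$; the introductory discussion of the present paper already notes that this generalization is available, so no new argument is required. Thus there is essentially no obstacle: the lemma is a direct corollary of \cite[Theorem 1.3]{HeNi2016-1} applied with the rescaled coefficients $\tilde m_i$. In the writeup I would keep the verification of the identifications explicit so that the reader can later apply Lemma \ref{clarifi} as a black box when analyzing the genuinely delayed model \eqref{lvs} in Section 3.
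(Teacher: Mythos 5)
Your proposal is correct and matches the paper's treatment: the paper gives no independent proof of this lemma but simply cites \cite[Theorem 1.3]{HeNi2016-1}, since \eqref{lvs0} is the classical system \eqref{lv} with growth rates $\tilde m_i=e^{-\gamma_i\tau_i}m_i$, which still satisfy $\mathbf{(M^+)}$. Your explicit verification of the identifications $\theta_{d_i,\tau_i,\gamma_i,m_i}=\theta_{d_i,\tilde m_i}$ and of the eigenvalue quantities defining $S_u$, $S_v$, $S_-$, $S_{0,0}$ is exactly the intended reduction.
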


\section{Global Dynamics}
In this section, we  give a complete classification of the global dynamics of model \eqref{lvs}, and our approach is motivated by the ones in \cite{HeNi2016-1}.
We first consider the eigenvalue problem associated with a positive steady state of \eqref{lvs}. Let
$(u,v)$ be a positive steady state of system \eqref{lvs}. Linearizing system \eqref{lvs} at $(u,v)$, we obtain the following eigenvalue problem
\begin{equation}\label{eig}
\begin{cases}
  \la\phi_1 =d_1\Delta \phi_1 +e^{-\gamma_1\tau_1-\la\tau_1}m_1(x)\phi_1 -(2u+cv)\phi_1-cu\phi_2, & x\in \Omega,\\
 \la\phi_2=d_2\Delta \phi_2+e^{-\gamma_2\tau_2-\la\tau_2} m_2(x)\phi_2-(bu+2v)\phi_2-bv\phi_1, & x\in\Omega,\\
\ds\f{\partial \phi_1}{\partial n}=\ds\f{\partial \phi_2}{\partial n}=0,& x\in\partial \Omega.
\end{cases}
\end{equation}
Then $(u,v)$ is linearly stable if all the eigenvalues of problem \eqref{eig} have negative real parts.
By virtue of the transformation $\psi_1=\phi_1$ and $\psi_2=-\phi_2$, the eigenvalue problem \eqref{eig} is equivalent to
\begin{equation}\label{eige}
\begin{cases}
  \la\psi_1 =d_1\Delta \psi_1 +e^{-\gamma_1\tau_1-\la\tau_1}m_1(x)\psi_1 -(2u+cv)\psi_1+cu\psi_2, & x\in \Omega,\\
 \la\psi_2=d_2\Delta \psi_2+e^{-\gamma_2\tau_2-\la\tau_2} m_2(x)\psi_2-(bu+2v)\psi_2+bv\psi_1, & x\in\Omega,\\
\ds\f{\partial \psi_1}{\partial n}=\ds\f{\partial \psi_2}{\partial n}=0,& x\in\partial \Omega.
\end{cases}
\end{equation}
Denote by $\la_1$ the principal eigenvalue of the following eigenvalue problem
\begin{equation}\label{eige0}
\begin{cases}
  \la\psi_1 =d_1\Delta \psi_1 +e^{-\gamma_1\tau_1}m_1(x)\psi_1 -(2u+cv)\psi_1+cu\psi_2, & x\in \Omega,\\
 \la\psi_2=d_2\Delta \psi_2+e^{-\gamma_2\tau_2} m_2(x)\psi_2-(bu+2v)\psi_2+bv\psi_1, & x\in\Omega,\\
\ds\f{\partial \psi_1}{\partial n}=\ds\f{\partial \psi_2}{\partial n}=0,& x\in\partial \Omega.
\end{cases}
\end{equation}
Then we show that the eigenvalue problem \eqref{eig} (or equivalently, \eqref{eige}) also has a principal eigenvalue $\tilde{\la}_1$, which has the same sign as
$\la_1$. We say that $\lambda$ is a principal eigenvalue of problem problem \eqref{eige0} (or respectively, \eqref{eige}) if
 \eqref{eige0} (or respectively, \eqref{eige}) has a solution $(\psi_1,\psi_2)>(0,0)$. Clearly,
 $$\la_1=\sup\{{\mathcal R}e \la:\la \;\;\text{is an eigenvalue of}\;\;\eqref{eige0}\},$$
and any eigenvalue $\lambda$ of \eqref{eige0} with $\lambda\ne\lambda_1$ satisfies ${\mathcal R}e \la<\lambda_1$.

The following result asserts the existence of principal eigenvalue for the eigenvalue problem \eqref{eige}, and the method that we use here for the proof is motivated by \cite{Thieme}.
\begin{theorem}\label{mai}
Assume that $m_i(x)$ satisfies assumption $\mathbf{(M^+)}$ for $i=1,2$, $d_1,d_2>0$, and $\tau_1,\tau_2,\gamma_1,\gamma_2\ge0$.
Then there exists a principal eigenvalue $\tilde\la_1$ of \eqref{eige}  with an associated eigenfunction $(\psi_1,\psi_2)>(0,0)$.
Furthermore,
\begin{enumerate}
\item [(i)] $\tilde\la_1=\sup\{{\mathcal R}e \la:\la \;\;\text{is an eigenvalue of}\;\;\eqref{eige}\}$,
\item [(ii)] $\tilde\la_1$ is simple and has the same sign as $\la_1$, where $\la_1$ is the principlal eigenvalue of \eqref{eige0},
\item [(iii)] any eigenvalue $\hat\lambda$ of \eqref{eige} with $\hat\lambda\ne\tilde \lambda_1$ satisfies ${\mathcal R}e \hat\la<\tilde \lambda_1$.
\end{enumerate}
\end{theorem}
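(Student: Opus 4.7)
The plan is to exploit the fact that system \eqref{eige}, with $u,v>0$ fixed, is a cooperative elliptic system (the off--diagonal coefficients $cu$ and $bv$ are nonnegative), and to reduce the nonlinear dependence on $\lambda$ through the terms $e^{-\gamma_i\tau_i-\lambda\tau_i}$ to a one--dimensional fixed point problem. For each real parameter $\sigma\in\mathbb{R}$, consider the linear cooperative elliptic system obtained from \eqref{eige} by \emph{freezing} $\lambda=\sigma$ in the exponents but keeping $\mu$ as the spectral parameter on the left:
\begin{equation*}
\begin{cases}
\mu\psi_1=d_1\Delta\psi_1+e^{-\gamma_1\tau_1-\sigma\tau_1}m_1(x)\psi_1-(2u+cv)\psi_1+cu\psi_2, & x\in\Omega,\\
\mu\psi_2=d_2\Delta\psi_2+e^{-\gamma_2\tau_2-\sigma\tau_2}m_2(x)\psi_2-(bu+2v)\psi_2+bv\psi_1, & x\in\Omega,\\
\partial_n\psi_1=\partial_n\psi_2=0, & x\in\partial\Omega.
\end{cases}
\end{equation*}
After shifting by a large constant, the resolvent of this cooperative system is compact and strongly positive on $Y^+\times Y^+$, so the Krein--Rutman theorem produces a simple principal eigenvalue $\mu(\sigma)$ with a positive eigenfunction, and $\mu(\sigma)$ strictly dominates the real parts of all other eigenvalues of the frozen system.

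Next I would study $\sigma\mapsto\mu(\sigma)$. By the variational (or Collatz--Wielandt) characterization of the principal eigenvalue of a cooperative system, increasing $\sigma$ strictly decreases the coefficients $e^{-\gamma_i\tau_i-\sigma\tau_i}m_i(x)$, hence strictly decreases $\mu(\sigma)$ (assuming $\tau_1+\tau_2>0$; if $\tau_1=\tau_2=0$ the claim reduces to $\tilde\lambda_1=\lambda_1$ and is trivial). Continuity in $\sigma$ follows from continuous dependence of the principal eigenvalue on the zeroth--order coefficients. For the limits, as $\sigma\to+\infty$ the exponentials vanish and $\mu(\sigma)$ converges to the principal eigenvalue of the residual cooperative system (a finite number), while as $\sigma\to-\infty$ the coefficients blow up and $\mu(\sigma)\to+\infty$. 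Therefore the equation $\mu(\sigma)=\sigma$ has a unique real solution, which I call $\tilde\lambda_1$, and by construction $\tilde\lambda_1$ is an eigenvalue of \eqref{eige} with a positive eigenfunction $(\psi_1,\psi_2)>(0,0)$.

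For part (ii), note that $\mu(0)$ is precisely the principal eigenvalue $\lambda_1$ of \eqref{eige0}. Because $\mu$ is continuous and strictly decreasing and the fixed point equation is $\mu(\sigma)=\sigma$, a straightforward case analysis at $\sigma=0$ (compare $\mu(0)$ with $0$) shows that $\tilde\lambda_1$ and $\lambda_1$ have the same sign and vanish together. Simplicity of $\tilde\lambda_1$ is inherited from the simplicity of the principal eigenvalue $\mu(\tilde\lambda_1)$ of the associated cooperative system. For parts (i) and (iii), suppose $\hat\lambda\in\mathbb{C}$ is any eigenvalue of \eqref{eige} with eigenfunction $(\phi_1,\phi_2)$ and set $\sigma=\mathcal{R}e\,\hat\lambda$. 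Taking moduli and using Kato's inequality $\Delta|\phi_i|\ge\mathcal{R}e(\overline{\mathrm{sgn}\,\phi_i}\,\Delta\phi_i)$ together with $|e^{-\gamma_i\tau_i-\hat\lambda\tau_i}|=e^{-\gamma_i\tau_i-\sigma\tau_i}$, one checks that $(|\phi_1|,|\phi_2|)\ge(0,0)$ is a subsolution of the frozen cooperative system at parameter $\sigma$ with spectral value $\sigma$; by the characterization of $\mu(\sigma)$ as the largest such value, $\sigma\le\mu(\sigma)$. Combining this with $\mu(\tilde\lambda_1)=\tilde\lambda_1$ and the strict monotonicity of $\mu$ yields $\sigma\le\tilde\lambda_1$, with equality only if $\hat\lambda=\tilde\lambda_1$, which gives (i) and (iii).

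The main obstacle I anticipate is the modulus/Kato argument in the last paragraph: the off--diagonal coupling $cu\phi_2$ and $bv\phi_1$ must be handled carefully when taking moduli, since $|cu\phi_2+\cdots|\le cu|\phi_2|+\cdots$ holds only in the right direction, and one needs the cooperative sign structure together with the identity $|e^{-\hat\lambda\tau}|=e^{-\sigma\tau}$ to turn the eigenvalue equation for $(\phi_1,\phi_2)$ into a genuine differential inequality for $(|\phi_1|,|\phi_2|)$. Once this subsolution inequality is in place, the strong maximum principle for cooperative systems and the strict monotonicity of $\mu$ give the strict inequality $\mathcal{R}e\,\hat\lambda<\tilde\lambda_1$ for $\hat\lambda\ne\tilde\lambda_1$, completing the proof.
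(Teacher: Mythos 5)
Your argument is essentially correct, but it is a genuinely different route from the paper's. The paper works with the solution semiflow $U(t)$ of the abstract delayed linear equation: it proves positivity and eventual strong positivity of $U(t)$, applies the Krein--Rutman theorem to the compact, strongly positive time-$t_0$ map $U(t_0)$ to produce $r(U(t_0))$ as a simple dominant eigenvalue with positive eigenfunction, identifies $s(A_U)$ with $\tilde\lambda_1$ via the spectral mapping theorem, and invokes Kerscher--Nagel to compare the sign of $s(A_U)$ with $s(B+L_0)=\lambda_1$. You instead freeze the delay exponents to obtain a one-parameter family of non-delayed cooperative elliptic systems with principal eigenvalue $\mu(\sigma)$, and solve the scalar characteristic equation $\mu(\sigma)=\sigma$. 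Your route buys the sign comparison in (ii) almost for free, since $\mu(0)=\lambda_1$ and $\mu$ is decreasing, whereas the paper must cite \cite{Kerscher1984} for this; the price is that the dominance statements (i) and (iii), which the paper gets directly from Krein--Rutman applied to $U(t_0)$, must be done by hand via the Kato-inequality/modulus argument, including the equality-case analysis (constant phase of $\phi_i$, $\mathrm{sgn}\,\phi_1=\mathrm{sgn}\,\phi_2$, and $\tau_i\,{\mathcal I}m\,\hat\lambda\in 2\pi\mathbb{Z}$) needed to turn $\le$ into $<$ in (iii). You correctly flag this as the delicate step, and it does go through for a fully coupled cooperative system (here $cu,bv>0$ on $\overline\Omega$ since $(u,v)$ is a positive steady state), using the positive adjoint principal eigenfunction of the frozen system to force pointwise equality.

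One point you should tighten: your argument yields only \emph{geometric} simplicity of $\tilde\lambda_1$ (the eigenspace of \eqref{eige} at $\tilde\lambda_1$ is spanned by the positive eigenfunction of the frozen system at $\sigma=\tilde\lambda_1$). The simplicity asserted in (ii) --- and obtained in the paper from the Krein--Rutman theorem applied to $U(t_0)$ --- is algebraic simplicity of $\tilde\lambda_1$ as an eigenvalue of the generator of the delayed semiflow. In your framework this requires an extra transversality observation: writing the characteristic operator $\Delta(\lambda)=\lambda I-B-L_\lambda$, one checks that $\Delta'(\tilde\lambda_1)$ applied to the principal eigenfunction is not in the range of $\Delta(\tilde\lambda_1)$, e.g.\ by pairing with the positive adjoint eigenfunction and noting that $\langle\psi^*,\psi+\tau_1 e^{-\gamma_1\tau_1-\tilde\lambda_1\tau_1}m_1\psi_1+\cdots\rangle>0$; equivalently, the fixed point of $\mu(\sigma)=\sigma$ is transversal because $\mu'(\tilde\lambda_1)\le 0<1$. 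This is a short addition, not a flaw in the strategy, but as written the proposal does not establish the form of simplicity the theorem (and its later use via linear stability) requires.
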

\begin{proof}
If $\tau_1=\tau_2=0$, then the eigenvalue problem \eqref{eige} is reduced to \eqref{eige0}. Therefore, we only need to consider the case that at least one of $\tau_1$ and $\tau_2$ is positive.
Define $L=(L_1,L_2):E\to Y\times Y$ by
\begin{equation}\label{L}
\begin{split}
L_1(\psi_1,\psi_2)=&e^{-\gamma_1\tau_1}m_1(x)\psi_1(-\tau_1)+cu\psi_2(0),\\
L_2(\psi_1,\psi_2)=&e^{-\gamma_2\tau_2} m_2(x)\psi_2(-\tau_2)+bv\psi_1(0),\;\;(\psi_1,\psi_2)\in E,\\
\end{split}
\end{equation}
and $B=(B_1,B_2):\mathcal{D}(B)\subset Y\times Y\to Y\times Y$ by
\begin{equation}\label{B}
\begin{split}
B_1(\phi_1,\phi_2)=&d_1\Delta\phi_1-(2u+cv)\phi_1,\\
B_2(\phi_1,\phi_2)=&d_2\Delta \phi_2-(bu+2v)\phi_2, \;\;(\phi_1,\phi_2)\in \mathcal{D}(B).
\end{split}
\end{equation}
Clearly, the linear operator $L$ is positive, i.e., $L(E^+)\subset Y^+\times Y^+$. On the other hand, the linear operator $B$ generates a compact and analytic semigroup $T(t)$ on $Y\times Y$, and $T(t):Y\times Y\to Y\times Y$ is also positive. Let $U(t):E\to E$ be the solution semiflow associated with the abstract delayed linear equation
\begin{equation}\label{abs}
\begin{cases}
\ds\f{dV(t)}{dt}=B V(t)+LV_t, &t>0,\\
V(0)=\Psi_0=(\psi_{0,1},\psi_{0,2})\in E,
\end{cases}
\end{equation}
and let $A_U$ be its generator. Let $V(x,t,\Psi_0)=(v_1(x,t,\Psi_0),v_2(x,t,\Psi_0))$ be the solution of \eqref{abs} with initial value $\Psi_0=(\psi_{0,1},\psi_{0,2})\in E$. Then $U(t)\Psi_0=(v_1(x,t+\theta_1,\Psi_0),v_2(x,t+\theta_2,\Psi_0))\in E$, where $\theta_i\in[-\tau_i,0]$ for $i=1,2$.

We divide the following proof into several steps.

\noindent{\bf Step 1}. We  show that $U(t)$ is positive, i.e., $U(t)(E^+)\subset E^+$.

For convenience, we use $V(x,t)$ and $v_i(x,t)$ ($i=1,2$) to denote $V(x,t,\Psi_0)$ and $v_i(x,t,\Psi_0)$ ($i=1,2$).
Denote \begin{equation}\label{tildetau}
\tilde \tau:=\min\{\tau_1,\tau_2\}.
\end{equation} If $\tilde \tau>0$, then $V(x,t)$ satisfies that, for $t\in(0,\tilde\tau)$,
\begin{equation}\label{neg}
\begin{cases}
 \ds\f{\partial v_1}{\partial t} -d_1\Delta v_1+(2u+cv)v_1-cu v_2 =e^{-\gamma_1\tau_1}m_1(x)\psi_{0,1}(x,t-\tau_1)\ge0, &\;\;x\in \Omega,\\
\ds\f{\partial v_2}{\partial t}-d_2\Delta v_2+(bu+2v)v_2-bv v_1=e^{-\gamma_2\tau_2} m_2(x)\psi_{0,2}(x,t-\tau_2)\ge0, &\;\; x\in\Omega,\\
\ds\f{\partial v_1}{\partial n}=\ds\f{\partial v_2}{\partial n}=0, &\;\; x\in\partial \Omega.\\
\end{cases}
\end{equation}
It follows from the comparison principle that $v_i(x,t)\ge0$ for $(x,t)\in\overline\Omega\times [0,\tilde \tau]$ and $i=1,2$. By the method of step, we obtain that
\begin{equation}
v_i(x,t)\ge0\;\; \text{for}\;\; (x,t)\in\overline\Omega\times [0,\infty) \;\;\text{and}\;\;i=1,2,
\end{equation}
which implies that $U(t)(E^+)\subset E^+$.
If $\tilde\tau=0$, then $\tau_1=0$ or $\tau_2=0$. We only need to prove the case that $\tau_1>0$ and $\tau_2=0$, and the other case could be proved similarly. Then, for $t\in (0,\tau_1]$, $V(x,t)$ satisfies that
\begin{equation}\label{neg00}
\begin{cases}
 \ds\f{\partial v_1}{\partial t} -d_1\Delta v_1+(2u+cv)v_1-cu v_2 =e^{-\gamma_1\tau_1}m_1(x)\psi_{0,1}(x,t-\tau_1)\ge0, &\;\;x\in \Omega,\\
\ds\f{\partial v_2}{\partial t}-d_2\Delta v_2+[bu+2v-m_2(x)]v_2-bv v_1\ge0, &\;\; x\in\Omega,\\
\ds\f{\partial v_1}{\partial n}=\ds\f{\partial v_2}{\partial n}=0, &\;\; x\in\partial \Omega.
\end{cases}
\end{equation}
Similarly, we see from the comparison principle that $v_i(x,t)\ge0$ for $(x,t)\in\overline\Omega\times [0,\tau_1]$ and $i=1,2$. Then, by the method of step, we also obtain that  $U(t)(E^+)\subset E^+$ in this case.

\noindent{\bf Step 2}. Next we show that $U(t)$ is eventually strongly positive, i.e.,
there exists $t_*>0$ such that $U(t)(E^+\setminus \{\mathbf 0\})\subset int (E^+)$ for any $t>t_*$.
Here
\begin{equation*}
\begin{split}
&\{\mathbf 0\}=\{(\psi_1(x,\theta_1),\psi_2(x,\theta_2))\in E:\psi_i(x,\theta_i)\equiv 0\;\;\text{for}\;\;i=1,2\},\\
&int(E^+)=\{(\phi_1,\phi_2)\in E:\phi_i(x,\theta_i)>0 \;\;\text{for} \;\;x\in\overline \Omega,\;\theta_i\in[-\tau_i,0],\;i=1,2 \}.
\end{split}
\end{equation*}

Noticing that if
\begin{equation}\label{ini}\left(\psi_{0,1}(x,\theta_1),\;\psi_{0,2}(x,\theta_2)\right)\in E^+\setminus \{\mathbf 0\},\end{equation}
we have $\psi_{0,1}(x,\theta_1)\not\equiv 0$ or $\psi_{0,2}(x,\theta_2)\not\equiv 0$.
We only need to consider the case that $\psi_{0,1}(x,\theta_1)\not\equiv 0$, and the other case could be proved similarly.
If $\psi_{0,1}(x,\theta_1)\not\equiv 0$ and $\tau_1=0$, then it follows from the comparison principle that $v_1(x,t)>0$ for $x\in\overline\Omega$ and $t>\tau_1=0$.
If $\psi_{0,1}(x,\theta_1)\not\equiv 0$ and $\tau_1>0$, then
there exists $(x_0,\theta_0)\in \Omega\times (0,\tau_1)$ such that $\psi_{0,1}(x_0,-\theta_0)> 0$. We claim that $v_1(x,\tau_1-\theta_0)\not\equiv 0$. If it is not true, then
$v_1(x,\tau_1-\theta_0)\equiv 0$. This, combined with the first equation of \eqref{neg}, implies that
\begin{equation}\label{neg1}
 \ds\f{\partial v_1}{\partial t}(x_0,\tau_1-\theta_0) =e^{-\gamma_1\tau_1}m_1(x)\psi_{0,1}(x_0,-\theta_0) +cu v_2(x_0,\tau_1-\theta_0)>0.\\
\end{equation}
 Note that $v_1(x,t)\ge0$ for $(x,t)\in\overline\Omega\times [0,\infty)$ and $v_1(x_0,\tau_1-\theta_0)=0$.
 It follows that $\ds\f{\partial v_1}{\partial t}(x_0,\tau_1-\theta_0)=0$, which contradicts with \eqref{neg1}.
Then, $v_1(x,\tau_1-\theta_0)\not\equiv 0$. This, combined with the comparison principle, implies that
$v_1(x,t)>0$ for $x\in\overline\Omega$ and $t>\tau_1-\theta_0$.  Therefore, $v_1(x,t)>0$ for $x\in\overline\Omega$ and $t>\tau_1$.
Then, for $t>\tau_1$, $v_2(x,t)$ satisfies
\begin{equation*}
\begin{cases}
\ds\f{\partial v_2}{\partial t}-d_2\Delta v_2+(bu+2v)v_2\ge bv v_1>0, \;\; x\in\Omega,\\
\ds\f{\partial v_2}{\partial n}=0,\;\; x\in\partial \Omega.\\
\end{cases}
\end{equation*}
Similarly, we see from the comparison principle that $v_2(x,t)>0$ for $x\in\overline\Omega$ and $t>\tau_1$.
Note that  $U(t)\Psi_0=V_t(\Psi_0)$, where $V_t(\Psi_0)=(v_1(x,t+\theta_1),v_2(x,t+\theta_2))$ for $\theta_i\in[-\tau_i,0]$ and $i=1,2$.
It follows that $U(t)(E^+\setminus \{\mathbf 0\})\subset int (E^+)$ for any $t>2\tau_1+2\tau_2$.

\noindent{\bf Step 3}. Denote $s(A_U):=\sup\{{\mathcal{R}e \la:\la\in\sigma(A_U)}\}$. We prove that $s(A_U)$ is a simple eigenvalue of \eqref{eige} with a positive eigenfunction, and $s(A_U)$ has the same sign as the spectral bound $s(B+L_0)=\la_1$.

Since $U(t)$ is positive, it follows from \cite[Section 2]{Kerscher1984} that $s(A_U)$ is a spectral value of $A_U$. Define a operator $L_\la:Y\times Y\to Y\times Y$ by
$$L_\la\left(\phi_1, \phi_2\right)=L\left(
\phi_1e^{\la\theta_1}, \phi_2e^{\la\theta_2}\right),\;\;\theta_i\in[-\tau_i,0],\;i=1,2.$$
Then, from \cite[Section 4]{Kerscher1984}, we see that $s(A_U)$ has the same sign as the spectral bound $s(B+L_0)=\la_1$.
From \cite[Chapter 3]{wu1996theory}, we see that $\la\in\sigma_p(A_U)$ if and only if $\la$ is an eigenvalue of problem \eqref{eige}, and the corresponding eigenfunction of $A_U$ with respect to $\la$ is $(\psi_1 e^{\la\theta_1},\psi_2 e^{\la\theta_2})$ where $\theta_i\in[-\tau_i,0]$ for $i=1,2$ and $(\psi_1, \psi_2)$ is the corresponding eigenfunction of \eqref{eige} with respect to $\la$.
Therefore, we only need to show that
$s(A_U)\in \sigma_p(A_U)$ and the associated eigenfunction $(\psi^s_1 e^{s(A_U)\theta_1},\psi^s_2 e^{s(A_U)\theta_2})$  where $\theta_i\in[-\tau_i,0]$, $i=1,2$, is strongly positive, i.e., $(\psi^s_1,\psi^s_2)>(0,0)$.

It follows from \cite[Chapter 3]{wu1996theory} that $U(t):E\to E$ is compact for $t>\tau_1+\tau_2$.
Note that, for a fixed $t_0>2\tau_1+2\tau_2$, $U(t_0)$ is strongly positive. Then we see from the Krein-Rutman theorem (see \cite[Theorem 3.2]{Amann}) that the spectral radius $r(U(t_0))$ is positive and a simple eigenvalue eigenvalue
of $U(t_0)$ associated with an eigenfunction in $int(E^+)$, and any eigenvalue $\mu$ of $U(t_0)$ with $\mu\ne r(U(t_0))$ satisfies $|\mu|< r(U(t_0))$.
Then from \cite[Theorem 2.2.4]{pazy}, we obtain that there exists $\tilde\la\in\sigma_p(A_U)$ such that
$r(U(t_0))=e^{\tilde \la t_0}$. We claim that $\tilde \la\in\mathbb{R}$.
If it is not true, then $\tilde\la\in\mathbb{C}\setminus\mathbb{R}$.
Note that
\begin{equation}
\begin{split}
U(t_0)(\tilde \psi_1e^{\tilde \la\theta_1}, \tilde\psi_2e^{\tilde \theta_2})=e^{\tilde\la t_0}(\tilde \psi_1e^{\tilde\la\theta_1}, \tilde \psi_2e^{\tilde\la\theta_2})
=r(U(t_0))(\tilde \psi_1e^{\tilde \la\theta_1}, \tilde\psi_2e^{\tilde\la\theta_2}),
\end{split}
\end{equation}
where $(\tilde\psi_1(x),\tilde\psi_2(x))$ is the corresponding eigenfunction with respect to $\tilde\la$ for \eqref{eige}. Then
we have $(\tilde \psi_1e^{\tilde \la\theta_1}, \tilde\psi_2e^{\tilde\la\theta_2})\in int(E^+)$. If $\tau_1=\tau_2=0$, then $\tilde\psi_1,\tilde\psi_2>0$, which yields $\tilde\la\in\mathbb{R}$.
This is a contradiction. If $\tau_i\ne0$, then $\tilde \psi_i e^{\tilde \la\theta_i}\not \in int(E_i^+)$ for $i=1,2$, which is also a contradiction.
Therefore the claim is true, and consequently, $\tilde \la\le s(A_U)$.

Noticing that $s(A_U)$ is a spectral value of $A_U$, we see from \cite[Theorem 2.2.3]{pazy} that $e^{s(A_U)t_0}\in \sigma(U(t_0))$, which implies that $e^{s(A_U)t_0}\le r(U(t_0))=e^{\tilde \la t_0}$. Therefore, $s(A_U)= \tilde \la\in\sigma_p(A_U)$, and consequently, $s(A_U)$ is an eigenvalue of problem \eqref{eige} with the corresponding eigenfunction $(\psi^s_1, \psi^s_2)$.
Since
\begin{equation}
\begin{split}
U(t_0)\left(\psi^s_1e^{s(A_U)\theta_1}, \psi^s_2e^{s(A_U)\theta_2}\right)=&e^{s(A_U)t_0}\left(\psi^s_1e^{s(A_U)\theta_1}, \psi^s_2e^{s(A_U)\theta_2}\right)\\
=&r(U(t_0))\left(\psi^s_1e^{s(A_U)\theta_1}, \psi^s_2e^{s(A_U)\theta_2}\right),
\end{split}
\end{equation}
it follows from the Krein-Rutman theorem that $s(A_U)$ is simple and $\psi^s_i>0$ for $i=1,2$.
From the above three steps, we see that $\tilde\la_1=s(A_U)$ is the principle eigenvalue of \eqref{eige}, and $(i)$ and $(ii)$ hold.

\noindent{\bf Step 4}. We prove that $(iii)$ holds.

We firstly claim that, for any $\hat\la\in\sigma_p(A_U)$ and $\hat\la\ne s(A_U)$, $e^{\hat\la t_0}\ne e^{s(A_U)t_0}$. If it is not true, then $e^{\hat\la t_0}=e^{s(A_U)t_0}$, and consequently, there exist an integer $k\ne0$ and a constant $c_0(\ne 0)\in\mathbb{C}$ such that
$$\hat\la=s(A_U)+\ds\f{2k\pi }{t_0}{\rm i},$$
and $$\left(\hat\psi_1 e^{\hat\la\theta_1},\hat\psi_2 e^{\hat\la\theta_2}\right)=c_0\left(\psi^s_1e^{s(A_U)\theta_1}, \psi^s_2e^{s(A_U)\theta_2}\right)
\;\;\text{for any}\;\;x\in\overline\Omega,\;\theta_i\in[-\tau_i,0],\;i=1,2,$$
 where $(\hat\psi_1(x),\hat\psi_2(x))$ is the corresponding eigenfunction with respect to $\hat\la$ for \eqref{eige}.
This is a contradiction. Therefore, the claim is true, and from
the Krein-Rutman theorem, we have
\begin{equation*}
|e^{\hat\la t_0}|=e^{t_0{\mathcal R}e \hat \la }<r\left(U(t_0)\right)=e^{s(A_U)t_0},
\end{equation*}
which implies that ${\mathcal R}e \hat \la<s(A_U)$.
\end{proof}

Next we consider the eigenvalue problems associated with \eqref{lvs} with respect to the semitrivial steady states $(\theta_{d_1,\tau_1,\gamma_1,m_1},0)$ and $(0,\theta_{d_2,\tau_2,\gamma_2,m_2})$. Linearizing system \eqref{lvs} at $(\theta_{d_1,\tau_1,\gamma_1,m_1},0)$, we  obtain the following eigenvalue problem
\begin{equation}\label{eigsemi}
\begin{cases}
  \la\phi_1 =d_1\Delta \phi_1 +e^{-\gamma_1\tau_1-\la\tau_1}m_1(x)\phi_1 -2\theta_{d_1,\tau_1,\gamma_1,m_1}\phi_1-c\theta_{d_1,\tau_1,\gamma_1,m_1}\phi_2, & x\in \Omega,\\
 \la\phi_2=d_2\Delta \phi_2+e^{-\gamma_2\tau_2-\la\tau_2} m_2(x)\phi_2-b\theta_{d_1,\tau_1,\gamma_1,m_1}\phi_2, & x\in\Omega,\\
\ds\f{\partial \phi_1}{\partial n}=\ds\f{\partial \phi_2}{\partial n}=0,& x\in\partial \Omega.
\end{cases}
\end{equation}
Therefore, we only need to consider the following eigenvalue problem
\begin{equation}\label{eigsemisemi}
\begin{cases}
 \la\phi_2=d_2\Delta \phi_2+e^{-\gamma_2\tau_2-\la\tau_2} m_2(x)\phi_2-b\theta_{d_1,\tau_1,\gamma_1,m_1}\phi_2, & x\in\Omega,\\
\ds\f{\partial \phi_2}{\partial n}=0,& x\in\partial \Omega.\\
\end{cases}
\end{equation}
Similarly, the eigenvalue problem with respect to $(0,\theta_{d_2,\tau_2,\gamma_2,m_2})$ takes the following form:
\begin{equation}\label{eigsemi2}
\begin{cases}
  \la\phi_1 =d_1\Delta \phi_1 +e^{-\gamma_1\tau_1-\la\tau_1}m_1(x)\phi_1 -c\theta_{d_2,\tau_2,\gamma_2,m_2}\phi_1, & x\in \Omega,\\
 \la\phi_2=d_2\Delta \phi_2+e^{-\gamma_2\tau_2-\la\tau_2} m_2(x)\phi_2-2\theta_{d_2,\tau_2,\gamma_2,m_2}\phi_2-b\theta_{d_2,\tau_2,\gamma_2,m_2}\phi_1, & x\in\Omega,\\
\ds\f{\partial \phi_1}{\partial n}=\ds\f{\partial \phi_2}{\partial n}=0,& x\in\partial \Omega,\\
\end{cases}
\end{equation}
and we also only need to consider the following eigenvalue problem
\begin{equation}\label{eigsemisemi2}
\begin{cases}
\la\phi_1 =d_1\Delta \phi_1 +e^{-\gamma_1\tau_1-\la\tau_1}m_1(x)\phi_1 -c\theta_{d_2,\tau_2,\gamma_2,m_2}\phi_1, & x\in \Omega,\\
\ds\f{\partial \phi_1}{\partial n}=0,& x\in\partial \Omega.\\
\end{cases}
\end{equation}
By virtue of the similar arguments as Theorem \ref{mai} (see also \cite[Lemma 2.2]{GuoY2018}), we have the following results on the principal eigenvalues of \eqref{eigsemisemi} and \eqref{eigsemisemi2}.
\begin{theorem}\label{maisemi}
Assume that $m_i(x)$ satisfies assumption $\mathbf{(M^+)}$ for $i=1,2$, $d_1,d_2>0$, and $\tau_1,\tau_2,\gamma_1,\gamma_2\ge0$.
Then
\begin{enumerate}
\item [$(i)$]  problem \eqref{eigsemisemi} has a principal eigenvalue $\tilde \mu_1$, where $$\tilde\mu_1=\sup\{{\mathcal R}e \la:\la \;\;\text{is an eigenvalue of}\;\;\eqref{eigsemisemi}\},$$ and $\tilde \mu_1$ has the same sign as
$$\mu_1\left(d_2,e^{-\gamma_2\tau_2}m_2-b\theta_{d_1,\tau_1,\gamma_1,m_1}\right).$$
\item [$(ii)$] problem \eqref{eigsemisemi2} has a principal eigenvalue $\hat\mu_1$, where $$\hat\mu_1=\sup\{{\mathcal R}e \la:\la \;\;\text{is an eigenvalue of}\;\;\eqref{eigsemisemi2}\},$$  and $\hat \mu_1$ has the same sign as $$\mu_1\left(d_1,e^{-\gamma_1\tau_1}m_1-c\theta_{d_2,\tau_2,\gamma_2,m_2}\right).$$
\end{enumerate}
\end{theorem}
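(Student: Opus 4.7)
The plan is to follow the four-step argument of Theorem \ref{mai} in the simpler scalar setting, since each of \eqref{eigsemisemi} and \eqref{eigsemisemi2} is a scalar delayed eigenvalue problem with no coupling to treat. I focus on statement $(i)$; statement $(ii)$ follows by interchanging the roles of the two species.

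For $(i)$, I would introduce the bounded positive linear operator $L: E_2 \to Y$ defined by $L\phi = e^{-\gamma_2\tau_2}m_2(x)\phi(-\tau_2)$ and the sectorial operator $B: \mathcal{D}(B)\subset Y \to Y$ defined by $B\phi = d_2\Delta\phi - b\theta_{d_1,\tau_1,\gamma_1,m_1}\phi$ with homogeneous Neumann conditions. Then $B$ generates a compact analytic positive semigroup on $Y$, and one forms the solution semiflow $U(t)$ on $E_2$ of the abstract delayed Cauchy problem $\dot V(t) = BV(t) + LV_t$, whose generator I denote by $A_U$. The scalar analogues of Steps 1 and 2 in Theorem \ref{mai} follow at once from the parabolic comparison principle and the strong maximum principle applied to the single equation for $v$: one shows $U(t)(E_2^+)\subset E_2^+$ for all $t\ge 0$ and $U(t)(E_2^+\setminus\{\mathbf{0}\})\subset \operatorname{int}(E_2^+)$ for all $t>2\tau_2$ (the coupling term $cu\psi_2$ that complicated Theorem \ref{mai} is absent here).

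With eventual strong positivity in hand, Step 3 is carried out exactly as before: for a fixed $t_0>2\tau_2$, the compact operator $U(t_0)$ on $E_2$ has, by the Krein--Rutman theorem, a simple dominant positive eigenvalue $r(U(t_0))$ with a strongly positive eigenfunction, and by the semigroup correspondence of \cite[Chapter 3]{wu1996theory} together with \cite[Section 2]{Kerscher1984} this yields $\tilde\mu_1:=s(A_U)\in \sigma_p(A_U)$, which is a principal eigenvalue of \eqref{eigsemisemi} with a positive eigenfunction. Step 4 then gives $\tilde\mu_1 = \sup\{\mathcal{R}e\,\la : \la \text{ is an eigenvalue of } \eqref{eigsemisemi}\}$ via the same contradiction argument: any other eigenvalue $\hat\la$ satisfies $e^{\hat\la t_0}\in\sigma(U(t_0))$ with $|e^{\hat\la t_0}|<r(U(t_0))$, hence $\mathcal{R}e\,\hat\la<\tilde\mu_1$.

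For the sign relation, I would use the device of \cite[Section 4]{Kerscher1984} used in Theorem \ref{mai}: define $L_\la\phi = L(\phi e^{\la\theta_2}) = e^{-\gamma_2\tau_2-\la\tau_2}m_2(x)\phi$ on $Y$. The Kerscher--Nagel theorem asserts that $s(A_U)$ has the same sign as the spectral bound $s(B+L_0)$. Since
\begin{equation*}
(B+L_0)\phi = d_2\Delta\phi + \bigl(e^{-\gamma_2\tau_2}m_2(x)-b\theta_{d_1,\tau_1,\gamma_1,m_1}\bigr)\phi,
\end{equation*}
standard elliptic theory gives $s(B+L_0)=\mu_1\bigl(d_2,e^{-\gamma_2\tau_2}m_2-b\theta_{d_1,\tau_1,\gamma_1,m_1}\bigr)$, which establishes the claim. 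The only mildly delicate point is the verification that the scalar analogue of Step 2 yields strong positivity on $E_2^+\setminus\{\mathbf 0\}$ when $\tau_2>0$: the argument of Theorem \ref{mai} that picks a point $(x_0,-\theta_0)$ at which the initial datum is positive and uses $\partial_t v(x_0,\tau_2-\theta_0) = e^{-\gamma_2\tau_2}m_2(x_0)\psi_0(x_0,-\theta_0) > 0$ against $v\ge 0$ with $v(x_0,\tau_2-\theta_0)=0$ transfers without change. Statement $(ii)$ follows by repeating the argument with $d_1,\tau_1,\gamma_1,m_1,c\theta_{d_2,\tau_2,\gamma_2,m_2}$ in place of the corresponding data.
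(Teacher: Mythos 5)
Your proposal is correct and is essentially the paper's own argument: the paper proves Theorem \ref{maisemi} simply by invoking ``the similar arguments as Theorem \ref{mai}'' (together with a reference to \cite[Lemma 2.2]{GuoY2018}), and your write-up carries out exactly that reduction in the scalar setting, with the right choices of $L$, $B$, and $L_0$ so that $s(B+L_0)=\mu_1\bigl(d_2,e^{-\gamma_2\tau_2}m_2-b\theta_{d_1,\tau_1,\gamma_1,m_1}\bigr)$. No gaps; the only difference is that you spell out details the paper leaves implicit.
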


Finally we show that system \eqref{lvs} generates a monotone dynamical system.
\begin{proposition}\label{monot}
Let $(U_i(x,t),V_i(x,t))$ be the corresponding solution of model \eqref{lvs} with initial value $(U_{0,i},V_{0,i})$ for $i=1,2$. Assume that
\begin{equation*}
\begin{split}
&U_{0,1}\ge U_{0,2}\ge0 \;\;\text{for}\;\;x\in\overline\Omega,\;t\in[-\tau_1,0],\\
&0\le V_{0,1}\le V_{0,2}\;\;\text{for}\;\;x\in\overline\Omega,\;t\in[-\tau_2,0].
 \end{split}
 \end{equation*}
Then  $$U_1(x,t)\ge U_2(x,t)\;\;\text{and}\;\; V_1(x,t)\le V_2(x,t)\;\;\text{for}\;\;x\in\overline\Omega,\;t\ge0.$$
\end{proposition}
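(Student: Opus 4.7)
The plan is to exploit the competitive structure of \eqref{lvs} by switching the sign of one component. Set
\[
P(x,t):=U_1(x,t)-U_2(x,t),\qquad Q(x,t):=V_2(x,t)-V_1(x,t),
\]
so that the hypothesis becomes $P(x,t)\ge 0$ on $\overline\Omega\times[-\tau_1,0]$ and $Q(x,t)\ge 0$ on $\overline\Omega\times[-\tau_2,0]$, and the claim is $P(x,t),Q(x,t)\ge 0$ for all $t\ge 0$.

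Subtracting the two $U$-equations and the two $V$-equations in \eqref{lvs} and using the elementary identities $U_1^2-U_2^2=(U_1+U_2)P$, $U_1V_1-U_2V_2=V_1P-U_2Q$, $U_2V_2-U_1V_1=V_2P-U_1Q$, $V_2^2-V_1^2=(V_1+V_2)Q$, I obtain a linear nonautonomous delayed system of the form
\begin{equation*}
\begin{cases}
P_t=d_1\Delta P+e^{-\gamma_1\tau_1}m_1(x)P(x,t-\tau_1)-\alpha_1(x,t)P+cU_2\,Q,&x\in\Omega,\;t>0,\\
Q_t=d_2\Delta Q+e^{-\gamma_2\tau_2}m_2(x)Q(x,t-\tau_2)-\alpha_2(x,t)Q+bV_2\,P,&x\in\Omega,\;t>0,\\
\partial_n P=\partial_n Q=0,&x\in\partial\Omega,
\end{cases}
\end{equation*}
where $\alpha_1=U_1+U_2+cV_1\ge 0$ and $\alpha_2=bU_1+V_1+V_2\ge 0$ are bounded continuous coefficients determined by the (already known, nonnegative) solutions $(U_i,V_i)$. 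The essential observation is that this system is \emph{cooperative with nonnegative delayed feedback}: the off-diagonal instantaneous couplings $cU_2$ and $bV_2$ are nonnegative, and the delay terms enter with the nonnegative weights $e^{-\gamma_i\tau_i}m_i(x)$.

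I would then conclude by the method of steps, mimicking Step~1 in the proof of Theorem~\ref{mai}. With $\tilde\tau=\min\{\tau_1,\tau_2\}$ as in \eqref{tildetau}, on the interval $[0,\tilde\tau]$ the delayed terms $P(x,t-\tau_1)$ and $Q(x,t-\tau_2)$ are fixed nonnegative functions determined by the initial histories, so the system reduces to a cooperative reaction–diffusion system with nonnegative forcing. A standard parabolic comparison (applied to the $(P,Q)$-system, or equivalently truncating the negative parts and showing they vanish) yields $P,Q\ge 0$ on $\overline\Omega\times[0,\tilde\tau]$. Iterating on $[\tilde\tau,2\tilde\tau]$, $[2\tilde\tau,3\tilde\tau]$, etc., propagates the nonnegativity to all $t\ge 0$. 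The borderline case $\tilde\tau=0$ (exactly one $\tau_i$ vanishing) is handled in the same way as in \eqref{neg00}, by taking the first step of length equal to the positive delay.

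The only delicate point is the simultaneous comparison of the coupled pair $(P,Q)$ in the presence of two distinct delays; I would either invoke a vector-valued maximum principle for cooperative delayed parabolic systems (a direct consequence of the positivity of the associated semiflow, already established for the linearized operator in Step~1 of Theorem~\ref{mai}), or argue componentwise by testing against the negative parts $P_-,Q_-$ and using Gronwall in the $L^2$-norm. All other steps are routine bookkeeping once the sign change $Q=V_2-V_1$ has turned the competitive system into a cooperative one.
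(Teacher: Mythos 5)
Your proposal is correct and follows essentially the same route as the paper's proof: the same sign change $(P,Q)=(U_1-U_2,\,V_2-V_1)$ turning the competitive system into a cooperative one, the same reduction on $[0,\tilde\tau]$ where the delayed terms are nonnegative data from the initial histories, and the same comparison-principle-plus-method-of-steps conclusion (including the separate treatment of $\tilde\tau=0$). One cosmetic slip: the displayed identity should read $U_2V_2-U_1V_1=U_1Q-V_2P$ (not $V_2P-U_1Q$), but the linear system you derive from it is consistent with the correct version, so the argument is unaffected.
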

\begin{proof}
We only prove the case that $\tau_1,\tau_2\ne0$, and other cases could be proved similarly.
Let $\overline U(x,t)=U_1(x,t)-U_2(x,t)$, $\overline V(x,t)=V_2(x,t)-V_1(x,t)$, $\overline U_{0}=U_{0,1}-U_{0,2}$ and $\overline V_{0}=V_{0,2}-V_{0,1}$ for $i=1,2$.
Then $(\overline U(x,t),\overline V(x,t))$ satisfies
\begin{equation*}
\begin{cases}
  \ds\frac{\partial \overline U}{\partial t}-d_1\Delta \overline U+(U_1+U_2+cV_1)\overline U-cU_2\overline V\ge0, & x\in \Omega,\; t\in[0,\tilde \tau],\\
 \ds\frac{\partial \overline V}{\partial t}-d_2\Delta\overline  V+(V_1+V_2+b U_2)\overline V-b V_1\overline U\ge0, & x\in\Omega,\; t\in[0,\tilde \tau],\\
\ds\f{\partial \overline U}{\partial n}=\ds\f{\partial \overline V}{\partial n}=0,& x\in\partial \Omega,\;t\in[0,\tilde \tau],\\
\overline U(x,0)=\overline U_0(x,0)\ge0,\;\overline V(x,0)=\overline V_0(x,0)\ge0,&x\in\Omega,
\end{cases}
\end{equation*}
where $\tilde\tau$ is defined as in Eq. \eqref{tildetau}.
It follows from the comparison principle that $\overline U(x,t),\overline V(x,t)\ge0$ for $x\in\overline\Omega$ and $t\in[0,\tilde \tau]$.
Then, by the method of step, we could prove that $\overline U(x,t),\overline V(x,t)\ge0$ for $x\in\overline\Omega$ and $t\ge0$. This completes the proof.
\end{proof}

Then from Lemmas \ref{posss}, \ref{clarifi}, Theorems \ref{mai}, \ref{maisemi} and Proposition \ref{monot}, we can obtain the following complete classification on the global dynamics of model \eqref{lvs} for $0<bc\le1$.
\begin{theorem}\label{clarifi2}
Assume that $m_i(x) $ satisfies assumption $\mathbf{(M^+)}$ for $i=1,2$, and $0<bc\le1$. Then
we have the following mutually disjoint decomposition of $\Gamma$:
\begin{equation*}
\Gamma=\left(S_u\cup S_{u,0}\setminus S_{0,0}\right)\cup\left(S_v\cup S_{v,0}\setminus S_{0,0}\right)\cup S_{-}\cup S_{0,0}.
\end{equation*}
Moreover, the following statements hold for model \eqref{lvs}:
\begin{enumerate}
\item [(i)] For any $(d_1,d_2,\tau_1,\tau_2,\gamma_1,\gamma_2)\in \left(S_u\cup S_{u,0}\setminus S_{0,0}\right)$, $\left(\theta_{d_1,\tau_1,\gamma_1,m_1},0\right)$ is globally asymptotically stable.
\item [(ii)] For any $(d_1,d_2,\tau_1,\tau_2,\gamma_1,\gamma_2)\in \left(S_v\cup S_{v,0}\setminus S_{0,0}\right)$, $\left(0,\theta_{d_2,\tau_2,\gamma_2,m_2}\right)$ is globally asymptotically stable.
\item [(iii)] For any $(d_1,d_2,\tau_1,\tau_2,\gamma_1,\gamma_2)\in S_{-}$, model \eqref{lvs} has a unique positive steady state, which is globally asymptotically stable.
\item [(iv)] For any $(d_1,d_2,\tau_1,\tau_2,\gamma_1,\gamma_2)\in S_{0,0}$, $\theta_{d_1,\tau_1,\gamma_1,m_1}\equiv c\theta_{d_2,\tau_2,\gamma_2,m_2}$, and model \eqref{lvs} has a compact global attractor consisting of a continuum of steady states
    $$\{\left(\rho\theta_{d_1,\tau_1,\gamma_1,m_1},(1-\rho)\theta_{d_1,\tau_1,\gamma_1,m_1}/c\right):\rho\in(0,1)\}.$$
\end{enumerate}
\end{theorem}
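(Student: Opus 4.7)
The plan is to leverage the fact that the steady-state equation for \eqref{lvs} coincides with that of \eqref{lvs0} (since at equilibrium $U(x,t-\tau_1)\equiv U(x)$ and similarly for $V$), so the decomposition of $\Gamma$ and the list of equilibria in Lemma \ref{clarifi} transfer verbatim. What must be upgraded is the dynamical information: we replace the $Y\times Y$-semiflow by a semiflow $\Phi_t$ on $E^+$, and the non-delayed linear stability condition by the sign of the principal eigenvalue of the delayed linearization. Theorems \ref{mai} and \ref{maisemi} are precisely designed for this bridge, since they guarantee that the linearized stability of each equilibrium of \eqref{lvs} is governed by a real principal eigenvalue with the same sign as the corresponding non-delay eigenvalue. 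In particular, combining Lemma \ref{posss} with Theorem \ref{mai}(ii) gives that every positive steady state of \eqref{lvs} is linearly stable in the delayed sense whenever $(d_1,d_2,\tau_1,\tau_2,\gamma_1,\gamma_2)\in\Gamma\setminus S_{0,0}$.

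First I would establish dissipativity and compactness of $\Phi_t$: comparing each component with the scalar delayed logistic equation (with linear coefficient $e^{-\gamma_i\tau_i}\|m_i\|_\infty$) gives uniform ultimate $L^\infty$ bounds, and parabolic smoothing makes $\Phi_t$ eventually compact on $E^+$, so a compact global attractor $\mathcal{A}\subset E^+$ exists. By Proposition \ref{monot}, $\Phi_t$ is monotone with respect to the competitive order $(U_1,V_1)\preceq_K(U_2,V_2)\Longleftrightarrow U_1\le U_2,\ V_1\ge V_2$, and Step~2 of the proof of Theorem \ref{mai} (eventual strong positivity of the linear semiflow) adapts in a standard way to upgrade $\Phi_t$ to a strongly order-preserving semiflow between the two semitrivial equilibria. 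Cases (i) and (ii) then follow from the Hsu--Smith--Waltman trichotomy for strongly monotone two-species competitive semiflows: in $S_u\cup S_{u,0}\setminus S_{0,0}$, Theorem \ref{maisemi} makes $(0,\theta_{d_2,\tau_2,\gamma_2,m_2})$ linearly unstable (or neutrally stable with a one-sided flow away from it), no positive equilibrium exists by Lemma \ref{clarifi}, and monotone convergence forces every trajectory with $U_0\not\equiv 0$ to approach $(\theta_{d_1,\tau_1,\gamma_1,m_1},0)$; case (ii) is symmetric.

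For case (iii), both semitrivial equilibria are linearly unstable by Theorem \ref{maisemi}, so $\mathcal{A}$ must intersect the interior of $E^+$; since every positive steady state is linearly stable (Lemma \ref{posss} together with Theorem \ref{mai}(ii)), the standard argument that the order interval between two positive equilibria would have to contain an unstable point forces uniqueness, and strong monotonicity plus the instability of both boundary equilibria forces global attraction. Case (iv) is read off directly from Lemma \ref{clarifi}(iv): the identity $\theta_{d_1,\tau_1,\gamma_1,m_1}\equiv c\theta_{d_2,\tau_2,\gamma_2,m_2}$ produces the same totally ordered one-parameter family of equilibria joining the two semitrivials, and the monotonicity of $\Phi_t$ together with the compactness of $\mathcal{A}$ pins $\mathcal{A}$ exactly to this curve.

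The principal obstacle I expect is justifying convergence (rather than mere $\omega$-limit set containment) for the delayed semiflow $\Phi_t$ on $E^+$ in cases (i)--(iii). The monotone-dynamical-systems machinery available for delay equations (strong order preservation, generic and full convergence theorems) requires precompact orbits and genuine strong monotonicity of the \emph{nonlinear} semiflow, both of which must be derived by reproducing the Steps~1--2 argument of Theorem \ref{mai} on the nonlinear side; modulo this technical step, the sign information from Theorems \ref{mai} and \ref{maisemi} and the uniqueness information from Lemma \ref{posss} combine with Lemma \ref{clarifi} to yield the four-part classification.
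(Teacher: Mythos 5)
Your proposal is correct and follows essentially the same route as the paper: steady states of \eqref{lvs} coincide with those of \eqref{lvs0}, Theorems \ref{mai} and \ref{maisemi} transfer the sign of the relevant principal eigenvalues from the non-delay linearizations, Lemma \ref{posss} then gives linear stability of every positive steady state off $S_{0,0}$, and Proposition \ref{monot} plus the standard trichotomy for strongly monotone two-species competitive semiflows (the paper cites Hess, Proposition 9.1 and Theorem 9.2, in place of your Hsu--Smith--Waltman reference) yields the four-part classification. The technical points you flag (precompactness of orbits and strong order preservation of the nonlinear semiflow) are indeed the only details the paper leaves implicit in its citation of the monotone-systems machinery.
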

\begin{proof}
We only prove $(iii)$, and other cases could be proved similarly. We see from Theorem \ref{monot} that
system \eqref{lvs} generates a monotone dynamical system. It follows from Lemma \ref{posss} and Theorem \ref{mai}
that, for any $(d_1,d_2,\tau_1,\tau_2,\gamma_1,\gamma_2)\in \Gamma\setminus S_{0,0}$, every positive steady state of system \eqref{lvs} is linearly stable if exists. Note that, for $(d_1,d_2,\tau_1,\tau_2,\gamma_1,\gamma_2)\in S_{-}$, each of the two semi-trivial steady state is unstable. Then we see from the theory of monotone dynamical system (\cite[Proposition 9.1 and Theorem 9.2]{Hess}) that system \eqref{lvs} has a unique positive steady state, which is globally asymptotically stable.
\end{proof}
\begin{remark}
We see from the proof of \cite[Theorem 1.3]{HeNi2016-1} that if $S_{0,0}\ne \emptyset $, then $bc=1$. Therefore, if $0<b,c<1$ (the weak competition case), $S_{0,0}=\emptyset$. Then, for the weak competition case, the dynamics of model \eqref{lvs} can be classified as follows:
 either one of the two semitrivial steady states is globally asymptotically stable, or there exists a unique positive steady state which is globally asymptotically stable.
\end{remark}

\section{Applications and Discussion}
In this section, we first apply the obtained results in Section 3 to two concrete examples and show the effect of delays. Then we give some discussion and show that the method for model \eqref{lvs} can also be applied to another delayed competition model.

\subsection{Example (A)}
Firstly, we consider a special case and show the effect of delays $\tau_1$ and $\tau_2$.  By using the approach of adaptive dynamics \cite{Dieckmann}, we assume that $d_1=d_2=d$, $\gamma_1=\gamma_2=\gamma$, $b=c=1$, $m_1(x)=m_2(x)=m(x)$, and $\tau_1\ne\tau_2$. That is,
the two species are supposed to be identical except their maturation times:
\begin{equation}\label{lvsident}
\begin{cases}
  \ds\frac{\partial U}{\partial t}=d\Delta U+e^{-\gamma\tau_1}m(x)U(x,t-\tau_1)-U^2-UV, & x\in \Omega,\; t>0,\\
 \ds\frac{\partial V}{\partial t}=d\Delta V+e^{-\gamma\tau_2} m(x)V(x,t-\tau_2)-UV-V^2, & x\in\Omega,\; t>0,\\
\ds\f{\partial U}{\partial n}=\ds\f{\partial V}{\partial n}=0,& x\in\partial \Omega,\; t>0,\\
U(x,t)=U_0(x,t)\ge0, &x\in\Omega,\;t\in [-\tau_1,0],\\
V(x,t)=V_0(x,t)\ge0, &x\in\Omega,\;t\in [-\tau_2,0].\\
\end{cases}
\end{equation}
For simplicity of notations, we use $\theta_{\tau_1}$ and $\theta_{\tau_2}$ to denote $\theta_{d_1,\tau_1,\gamma_1,m_1}$ and $\theta_{d_2,\tau_2,\gamma_2,m_2}$, respectively.
Then the global dynamics of model \eqref{lvsident} can be classified as the following.
\begin{theorem}\label{ident}
Assume that $m(x)\in C^{\alpha}(\overline \Omega)$ $(\alpha\in(0,1))$, $m(x)>0$ on $\overline\Omega$, and $d,\gamma,\tau_1,\tau_2>0$. Then the following three statements hold.
\begin{enumerate}
\item [$(i)$] If $\tau_1>\tau_2$, then $(0,\theta_{\tau_2})$ is globally asymptotically stable.
\item [$(ii)$] If $\tau_1<\tau_2$, then $(\theta_{\tau_1},0)$ is globally asymptotically stable.
\item [$(iii)$] If $\tau_1=\tau_2$, then model \eqref{lvsident} has a compact global attractor consisting of a continuum of steady states
    $\{\left(\rho\theta_{\tau_1},(1-\rho)\theta_{\tau_1}\right):\rho\in(0,1)\}.$
\end{enumerate}
\end{theorem}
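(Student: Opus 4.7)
The plan is to view model \eqref{lvsident} as the special case of model \eqref{lvs} with $d_1=d_2=d$, $\gamma_1=\gamma_2=\gamma$, $b=c=1$, $m_1=m_2=m$, and then to identify, for each of the three cases $\tau_1>\tau_2$, $\tau_1<\tau_2$, $\tau_1=\tau_2$, which of the sets $S_u$, $S_v$, $S_{-}$, $S_{0,0}$ from Theorem~\ref{clarifi2} the parameter tuple $(d,d,\tau_1,\tau_2,\gamma,\gamma)$ belongs to. Once that is done, the three conclusions follow immediately by applying items (i), (ii), (iv) of Theorem~\ref{clarifi2}.

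The central computation will be for the two principal eigenvalues
$\mu_1(d,e^{-\gamma\tau_2}m-\theta_{\tau_1})$ and $\mu_1(d,e^{-\gamma\tau_1}m-\theta_{\tau_2})$.
Here the key identity is that, from \eqref{theta}, $\theta_{\tau}$ itself is a positive eigenfunction of the operator $d\Delta + (e^{-\gamma\tau}m-\theta_\tau)$ with eigenvalue $0$, so by uniqueness of the principal eigenvalue
\begin{equation*}
\mu_1\bigl(d,e^{-\gamma\tau}m-\theta_{\tau}\bigr)=0\quad\text{for every }\tau\ge0.
\end{equation*}
Next, a standard super/subsolution comparison applied to \eqref{theta} (using that $\tau\mapsto e^{-\gamma\tau}m(x)$ is strictly decreasing in $\tau$) gives the strict monotonicity $\theta_{\tau_1}<\theta_{\tau_2}$ whenever $\tau_1>\tau_2$, and $\theta_{\tau_1}=\theta_{\tau_2}$ when $\tau_1=\tau_2$.

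Assume $\tau_1>\tau_2$. Then $e^{-\gamma\tau_1}m-\theta_{\tau_2}<e^{-\gamma\tau_2}m-\theta_{\tau_2}$ pointwise (both terms shift in the favorable direction), so by the strict monotonicity of $\mu_1$ with respect to the potential,
\begin{equation*}
\mu_1\bigl(d,e^{-\gamma\tau_1}m-\theta_{\tau_2}\bigr)<\mu_1\bigl(d,e^{-\gamma\tau_2}m-\theta_{\tau_2}\bigr)=0,
\end{equation*}
which places $(d,d,\tau_1,\tau_2,\gamma,\gamma)\in S_v$. Similarly $e^{-\gamma\tau_2}m-\theta_{\tau_1}>e^{-\gamma\tau_1}m-\theta_{\tau_1}$, giving $\mu_1(d,e^{-\gamma\tau_2}m-\theta_{\tau_1})>0$, so the tuple is \emph{not} in $S_{0,0}$. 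Theorem~\ref{clarifi2}(ii) then yields (i). The case $\tau_1<\tau_2$ is identical after swapping the roles of the two species, giving (ii). For $\tau_1=\tau_2$, $\theta_{\tau_1}=\theta_{\tau_2}$ and both principal eigenvalues vanish, so the parameters lie in $S_{0,0}$ and Theorem~\ref{clarifi2}(iv) delivers the continuum of steady states claimed in (iii).

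The only nontrivial ingredients are the monotonicity $\theta_{\tau_1}<\theta_{\tau_2}$ for $\tau_1>\tau_2$ and the strict monotonicity of $\mu_1$ in its potential; both are classical. Thus I expect no real obstacle here: the theorem is essentially a corollary of Theorem~\ref{clarifi2}, with the short verification above replacing any further analysis.
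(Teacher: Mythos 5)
Your proposal is correct and follows essentially the same route as the paper: reduce to Theorem \ref{clarifi2} by locating $(d,d,\tau_1,\tau_2,\gamma,\gamma)$ in the decomposition via the identity $\mu_1\left(d,e^{-\gamma\tau}m-\theta_{\tau}\right)=0$ and strict monotonicity of $\mu_1$ in the potential. The only (immaterial) difference is that you perturb the $e^{-\gamma\tau}m$ term while the paper perturbs $\theta_\tau$ using the ordering $\theta_{\tau_1}<\theta_{\tau_2}$; your variant in fact makes that ordering unnecessary, and your parenthetical ``both terms shift'' is a slight overstatement since only one term changes in each comparison.
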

\begin{proof}
If $\tau_1>\tau_2$, then $\theta_{\tau_1}$ satisfies
\begin{equation}
\begin{split}
-d\Delta \theta_{\tau_1}=&\theta_{\tau_1}\left[e^{-\gamma\tau_1}m(x)-\theta_{\tau_1}\right]\\
<&\theta_{\tau_1}\left[e^{-\gamma\tau_2}m(x)-\theta_{\tau_1}\right],
\end{split}
\end{equation}
which implies that $\theta_{\tau_1}<\theta_{\tau_2}$ from the comparison principle. Noticing that
\begin{equation}
\begin{split}
-d\Delta \theta_{\tau_1}=&\theta_{\tau_1}\left[e^{-\gamma\tau_1}m(x)-\theta_{\tau_1}\right],\\
-d\Delta \theta_{\tau_2}=&\theta_{\tau_2}\left[e^{-\gamma\tau_2}m(x)-\theta_{\tau_2}\right],
\end{split}
\end{equation}
we have
\begin{equation}\label{0equa}
\mu_1\left(d,e^{-\gamma\tau_1}m(x)-\theta_{\tau_1}\right)=0,\;\;\mu_1\left(d,e^{-\gamma\tau_2}m(x)-\theta_{\tau_2}\right)=0.
\end{equation}
Therefore, for $\tau_1>\tau_2$,
\begin{equation}
\mu_1\left(d,e^{-\gamma\tau_1}m(x)-\theta_{\tau_2}\right)<0,\;\;\mu_1\left(d,e^{-\gamma\tau_2}m(x)-\theta_{\tau_1}\right)>0,
\end{equation}
which implies that $(0,\theta_{\tau_2})$ is globally asymptotically stable from Theorem \ref{clarifi2}. Similarly, we can prove part $(ii)$.
Part $(iii)$ could be obtained directly  from Eq. \eqref{0equa} and Theorem \ref{clarifi2}.
\end{proof}
Theorem \ref{ident} implies that the species with shorter maturation time will prevail if all other conditions (dispersal, growth) are identical, see Fig. \ref{figd} for the diagram of the global dynamics of model \eqref{lvsident} and Fig. \ref{fig2} for the numerical simulations.
\begin{figure}[htbp]
\centering\includegraphics[width=0.5\textwidth]{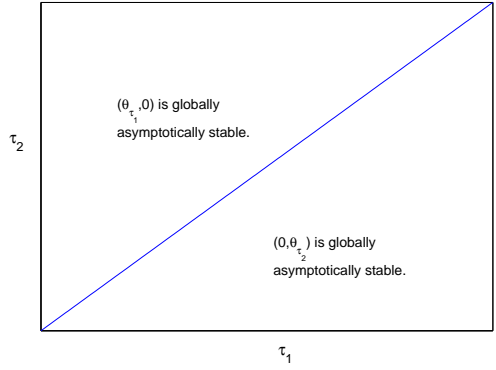}
\caption{The dynamics of model \eqref{lvsident}. \label{figd}}
\end{figure}

 \begin{figure}[htbp]
\centering\includegraphics[width=0.5\textwidth]{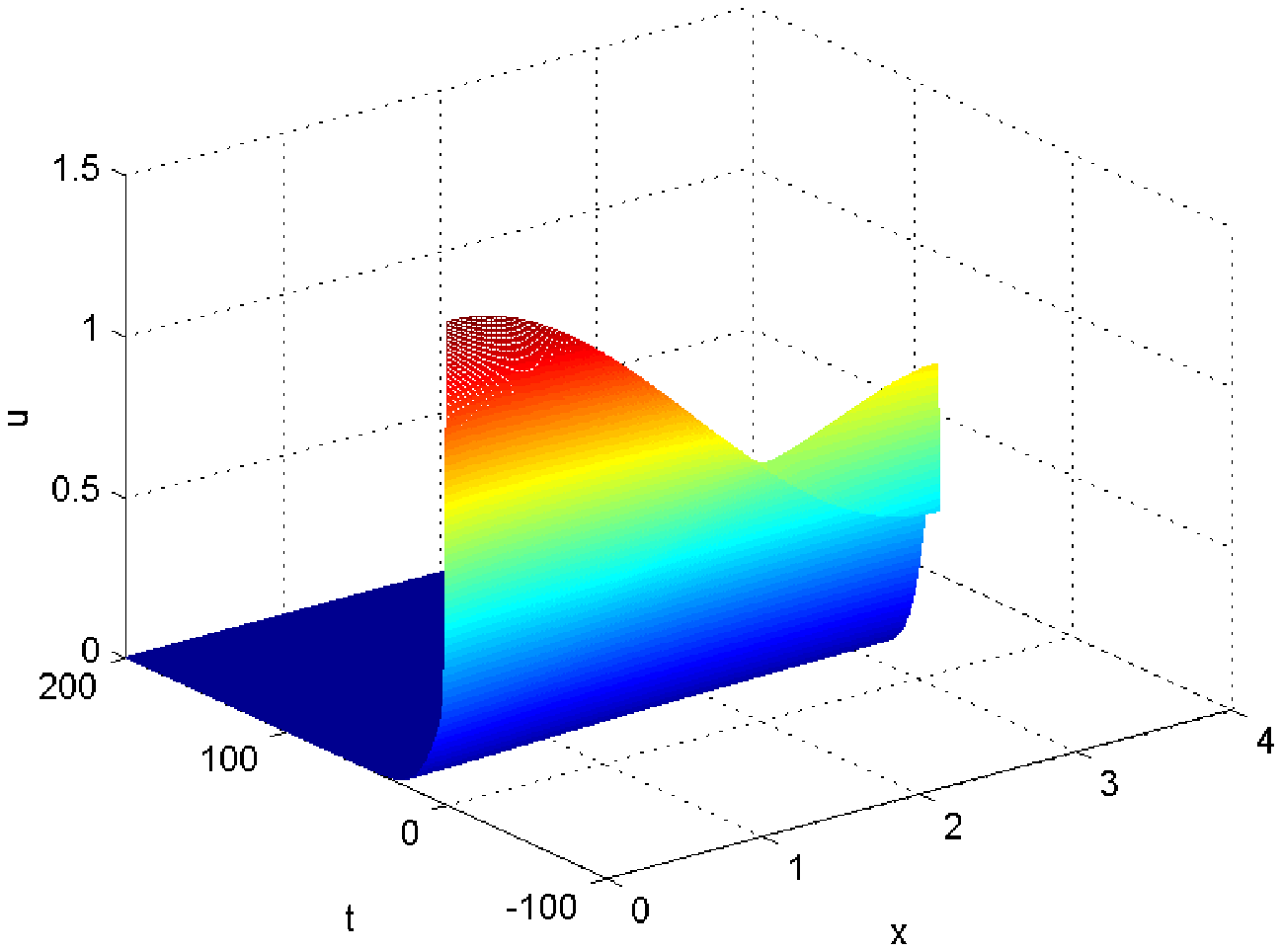}\includegraphics[width=0.5\textwidth]{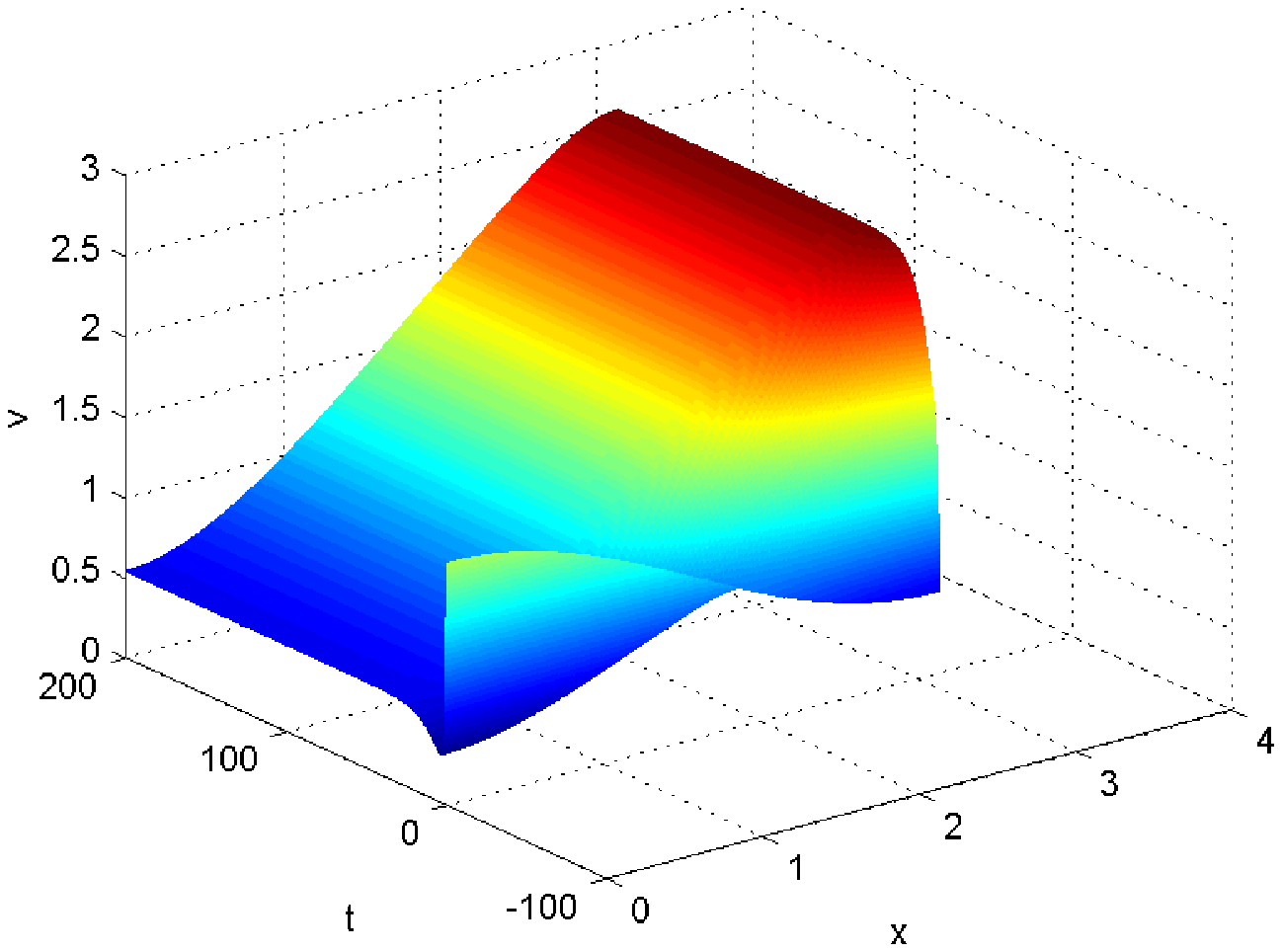}
\caption{The solution of model \eqref{lvsident} converges to the semitrivial steady state $(0,\theta_{\tau_2})$ for $\tau_1>\tau_2$. Here $d=0.2$, $\gamma=1$, $\tau_1=0.2$, $\tau_2=0.1$, $m(x)=x$, $\Omega=(0,\pi)$, and the initial values $u(x,t)=1+0.5\cos x$ for $x\in\overline\Omega,t\in[-\tau_1,0]$, and $v(x,t)=1+0.5\cos x$ for $x\in\overline\Omega,t\in[-\tau_2,0]$.\label{fig2}}
\end{figure}

\subsection{Example (B)}
In this subsection, we assume that $\gamma_2=\tau_2=0$, and revisit the model investigated in \cite{GuoY2018}. That is,
\begin{equation}\label{lvs00}
\begin{cases}
  \ds\frac{\partial U}{\partial t}=d_1\Delta U+e^{-\gamma_1\tau_1}m_1(x)U(x,t-\tau_1)-U^2-cUV, & x\in \Omega,\; t>0,\\
 \ds\frac{\partial V}{\partial t}=d_2\Delta V+m_2(x)V-bUV-V^2, & x\in\Omega,\; t>0,\\
\ds\f{\partial U}{\partial n}=\ds\f{\partial V}{\partial n}=0,& x\in\partial \Omega,\ t>0,\\
U(x,t)=U_0(x,t)\ge0, &x\in\Omega,\;t\in [-\tau_1,0],\\
V(x,t)=V_0(x,t)\ge0,&x\in\Omega,\;t=0.\\
\end{cases}
\end{equation}
We also consider the effect of delay for model \eqref{lvs00}, and the method is motivated by \cite{HeNi2013-2}. If $m_i(x)$ satisfies assumption $\mathbf{(M^+)}$ for $i=1,2$, then system \eqref{lvs00} has two semitrivial steady states
$$\left(\theta_{d_1,\tau_1,\gamma_1,m_1},0 \right)\;\;\text{and}\;\;\left(0,\theta_{d_2,0,0,m_2}\right).$$ Denote
\begin{equation}\label{simpo00}
\begin{split}
&\tilde S_u:=\{(d_1,d_2):(d_1,d_2,0,0,0,0)\in S_p\} \;\;\text{for}\;\; p=u,v,-,\\
&\tilde S_{p,0}:=\{(d_1,d_2):(d_1,d_2,0,0,0,0)\in S_{p,0}\} \;\;\text{for}\;\; p=u,v,0,\\
\end{split}
\end{equation}
where $S_u$, $S_v$, $S_-$, $S_{u,0}$, $S_{v,0}$ and $S_{0,0}$ are defined as in Eq. \eqref{simpo}.
It follows from \cite[Theorem 1.3]{HeNi2016-1} that if $m_i(x)$ satisfies assumption $\mathbf{(M^+)}$ for $i=1,2$, and $0<bc\le1$, then $\left(\mathbb{R}^+\right)^2$ has the following mutually disjoint decomposition:
\begin{equation}\label{td1d2}
\left(\mathbb{R}^+\right)^2=(\tilde S_u\cup\tilde S_{u,0}\setminus \tilde S_{0,0})\cup (\tilde S_v\cup\tilde S_{v,0}\setminus \tilde S_{0,0})\cup \tilde S_{-} \cup \tilde S_{0,0}.
\end{equation}\label{ex2}
Then we have the following results.
\begin{theorem}
Assume that $m_i(x)$ satisfies assumption $\mathbf{(M^+)}$ for $i=1,2$, and $0<bc\le1$. The following statements hold for system \eqref{lvs00}.
\begin{enumerate}
\item [$(i)$] If $(d_1,d_2)\in (\tilde S_v\cup\tilde S_{v,0}\setminus \tilde S_{0,0})\cup \tilde S_{0,0}=\tilde S_v\cup\tilde S_{v,0}$, then the semitrivial steady state
$\left(0,\theta_{d_2,0,0,m_2}\right)$ is globally asymptotically stable for any $\gamma_1,\tau_1>0$.
\item [$(ii)$] If $(d_1,d_2)\in \tilde S_{-}\cup\left(\tilde S_{u,0}\setminus \tilde S_{0,0}\right)$, then there exists $\tilde \delta\in(0,1)$ such that the semitrivial steady state
$\left(0,\theta_{d_2,0,0,m_2}\right)$ is globally asymptotically stable for $\gamma_1\tau_1\ge-\ln \tilde\delta$, and for $0<\gamma_1\tau_1<-\ln \tilde\delta$, system \eqref{lvs00} has a unique positive steady state, which is globally asymptotically stable.
\item [$(iii)$] If $(d_1,d_2)\in \tilde S_u$, then there exist $0<\delta_1\le\delta_2<1$ such that
\begin{equation*}
\begin{split}
&\mu_1\left(d_1,e^{-\gamma_1\tau_1}m_1-c\theta_{d_2,0,0,m_2}\right)=0\;\;\text{for}\;\;\gamma_1\tau_1=-\ln\delta_1,\\
&\mu_1\left(d_2,m_2-b\theta_{d_1,\tau_1,\gamma_1,m_1}\right)=0 \;\;\text{for}\;\; \gamma_1\tau_1=-\ln\delta_2.
\end{split}
\end{equation*}
Moreover,
\begin{enumerate}
\item [$(iii_1)$] if $\delta_1<\delta_2$, then $\left(\theta_{d_1,\tau_1,\gamma_1,m_1},0\right)$ is globally asymptotically stable for $0<\gamma_1\tau_1\le -\ln\delta_2$, $\left(0,\theta_{d_2,0,0,m_2}\right)$ is globally asymptotically stable for $\gamma_1\tau_1\ge-\ln \delta_1$, and for $-\ln\delta_2<\gamma_1\tau_1<-\ln\delta_1$, system \eqref{lvs00} has a unique positive steady state, which is globally asymptotically stable;
\item [$(iii_2)$] if $\delta_1=\delta_2$, then $\left(\theta_{d_1,\tau_1,\gamma_1,m_1},0\right)$ is globally asymptotically stable for $0<\gamma_1\tau_1< -\ln\delta_1$, $\left(0,\theta_{d_2,0,0,m_2}\right)$ is globally asymptotically stable for $\gamma_1\tau_1>-\ln \delta_1$, and for $\gamma_1\tau_1=-\ln\delta_1$, system \eqref{lvs00} has a compact global attractor consisting of a continuum of steady states.
\end{enumerate}
\end{enumerate}
\end{theorem}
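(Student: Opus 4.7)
The proof strategy exploits the fact that for model \eqref{lvs00} the pair $(\tau_1,\gamma_1)$ enters only through the product $\eta:=\gamma_1\tau_1$, since $\theta_{d_1,\tau_1,\gamma_1,m_1}$ solves \eqref{theta} with coefficient $e^{-\eta}m_1(x)$. Introduce the two invasion eigenvalues
\begin{equation*}
E_1(\eta):=\mu_1\left(d_2,\,m_2-b\theta_{d_1,\tau_1,\gamma_1,m_1}\right),\qquad E_2(\eta):=\mu_1\left(d_1,\,e^{-\eta}m_1-c\theta_{d_2,0,0,m_2}\right),
\end{equation*}
so that membership of the sextuple $(d_1,d_2,\tau_1,0,\gamma_1,0)$ in the sets of \eqref{simpo} is determined purely by the signs of $E_1(\eta)$ and $E_2(\eta)$. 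All the global dynamics statements then follow from Theorem \ref{clarifi2} once these signs are located.

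First I would show, by the comparison principle applied to the scalar logistic equation \eqref{theta}, that $\eta\mapsto\theta_{d_1,\tau_1,\gamma_1,m_1}$ is continuous and strictly pointwise decreasing on $[0,\infty)$, with limits $\theta_{d_1,0,0,m_1}$ at $\eta=0$ and $0$ at $\eta=\infty$. The standard monotonicity of $\mu_1(d,w)$ in its weight $w$ then gives that $E_1$ is continuous and strictly increasing while $E_2$ is continuous and strictly decreasing, with
\begin{equation*}
\lim_{\eta\to\infty}E_1(\eta)=\mu_1(d_2,m_2)>0,\qquad \lim_{\eta\to\infty}E_2(\eta)=\mu_1\left(d_1,-c\theta_{d_2,0,0,m_2}\right)<0.
\end{equation*}
Part $(i)$ is then immediate: $(d_1,d_2)\in\tilde S_v\cup\tilde S_{v,0}$ forces $E_2(0)\leq 0$, so $E_2(\eta)<0$ strictly for every $\eta>0$ and Theorem \ref{clarifi2}$(ii)$ applies. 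For $(ii)$, $(d_1,d_2)\in\tilde S_-\cup(\tilde S_{u,0}\setminus\tilde S_{0,0})$ gives $E_1(0)\geq 0$ and $E_2(0)>0$; then $E_1(\eta)>0$ throughout and the intermediate value theorem yields a unique $\eta_\ast>0$ with $E_2(\eta_\ast)=0$. Setting $\tilde\delta:=e^{-\eta_\ast}$, Theorem \ref{clarifi2}$(iii)$ applies on $(0,\eta_\ast)$ and Theorem \ref{clarifi2}$(ii)$ applies on $[\eta_\ast,\infty)$.

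For case $(iii)$, $(d_1,d_2)\in\tilde S_u$ means $E_1(0)<0$, and the disjoint decomposition in Lemma \ref{clarifi} under $0<bc\leq 1$ forces $E_2(0)>0$ (otherwise the pair would lie in both $S_u$ and $S_v\cup S_{v,0}$). By monotonicity and the limits above, each $E_i$ admits a unique zero $\eta_i^\ast>0$, and we set $\delta_2:=e^{-\eta_1^\ast}$ and $\delta_1:=e^{-\eta_2^\ast}$. The main obstacle is verifying the ordering $\delta_1\leq\delta_2$, equivalently $\eta_1^\ast\leq\eta_2^\ast$. I would argue by contradiction: if $\eta_2^\ast<\eta_1^\ast$, then at $\eta=\eta_2^\ast$ one would have $E_2=0$ and $E_1<0$, so the sextuple would lie in $S_u\cap S_{v,0}$ but not in $S_{0,0}$, contradicting the disjoint decomposition of $\Gamma$ in Lemma \ref{clarifi}.

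With the ordering $\eta_1^\ast\leq\eta_2^\ast$ in hand, sub-case $(iii_1)$ $(\delta_1<\delta_2)$ corresponds to $\eta_1^\ast<\eta_2^\ast$: the subintervals $(0,\eta_1^\ast]$, $(\eta_1^\ast,\eta_2^\ast)$, $[\eta_2^\ast,\infty)$ land respectively in $S_u\cup S_{u,0}\setminus S_{0,0}$, $S_-$, and $S_v\cup S_{v,0}\setminus S_{0,0}$, so parts $(i)$, $(iii)$, $(ii)$ of Theorem \ref{clarifi2} give the three stated conclusions. Sub-case $(iii_2)$ $(\delta_1=\delta_2)$ corresponds to $\eta_1^\ast=\eta_2^\ast=:\eta_0$: on either side of $\eta_0$ the picture matches $(iii_1)$, while at $\eta=\eta_0$ both eigenvalues vanish, placing the sextuple in $S_{0,0}$; Theorem \ref{clarifi2}$(iv)$ then furnishes the continuum global attractor.
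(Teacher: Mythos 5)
Your proposal is correct and follows essentially the same route as the paper: both reduce the problem to tracking the signs of the two invasion eigenvalues as functions of the single parameter $\gamma_1\tau_1$ (the paper uses $\delta=e^{-\gamma_1\tau_1}$ where you use $\eta=\gamma_1\tau_1$), establish strict monotonicity and the limiting signs, locate the zeros, and prove the ordering $\delta_1\le\delta_2$ by contradiction with the mutually disjoint decomposition before invoking Theorem \ref{clarifi2}. The only cosmetic differences are that the paper derives $\theta_{d_1,\tau_1,\gamma_1,m_1}\to 0$ via the rescaling $\theta_{1,\delta}=\delta\tilde\theta_{1,\delta}$ and a result of Lou rather than a direct bound, and it obtains the contradiction on an interval where both eigenvalues are negative rather than at the single point $\eta_2^\ast$.
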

\begin{proof}
Denote $\delta=e^{-\gamma_1\tau_1}$, $\theta_{1,\delta}=\theta_{d_1,\tau_1,\gamma_1,m_1}$ and $\theta_2=\theta_{d_2,0,0,m_2}$.
Then
$\delta\in(0,1)$, $\theta_{1,\delta}$ depending on $\delta$ satisfies
\begin{equation*}
\begin{cases}
d_1 \Delta u+u(\delta m_1(x)-u)=0, &x\in\Omega,\\
\ds\f{\partial u}{\partial n}=0,&x\in\partial \Omega,
\end{cases}
\end{equation*}and $\theta_2$ satisfies
\begin{equation*}
\begin{cases}
d_2 \Delta v+u(m_2(x)-v)=0, &x\in\Omega,\\
\ds\f{\partial v}{\partial n}=0,&x\in\partial \Omega.
\end{cases}
\end{equation*}
Let $\theta_{1,\delta}=\delta\tilde \theta_{1,\delta}$, and a direct computation implies that $\tilde \theta_{1,\delta}$ satisfies
\begin{equation}
\begin{cases}
d_1 \Delta u+\delta u( m_1(x)-u)=0, &x\in\Omega,\\
\ds\f{\partial u}{\partial n}=0,&x\in\partial \Omega.
\end{cases}
\end{equation}
It follows from \cite[Theorem 1.1]{Lou} that
$$\lim_{\delta\to0} \tilde\theta_{1,\delta}=\ds\f{1}{|\Omega|}\int_\Omega m_1(x)dx\;\; \text{in}\;\;C^2(\overline \Omega),$$
which yields
\begin{equation}\label{sti}
\lim_{\delta\to0} \theta_{1,\delta}=0\;\; \text{in}\;\;C^2(\overline \Omega).
\end{equation}
Denote
\begin{equation*}f_1(\delta):=\mu_1(d_2,m_2-b\theta_{1,\delta})\;\;\text{and}\;\; f_2(\delta):=\mu_1(d_1,\delta m_1(x)-c\theta_2),
 \end{equation*} where $\mu_1(d,w)$ is the principal eigenvalue of \eqref{seigen}. As in the proof of Theorem \ref{ident}, we see that
 $\theta_{1,\delta_1}<\theta_{1,\delta_2}$ if $\delta_1<\delta_2$, which implies that $f_1(\delta)$ is strictly decreasing and $ f_2(\delta)$ is strictly increasing for $\delta\in(0,1)$.
It follows from Eq. \eqref{sti} that
\begin{equation*}
\lim_{\delta\to0} f_1(\delta)>0\;\;\text{and}\;\;\lim_{\delta\to0} f_2(\delta)<0.
\end{equation*}

The following discussions are divided into four cases.\\
{\bf Case (i).} If $(d_1,d_2)\in (\tilde S_v\cup\tilde S_{v,0}\setminus \tilde S_{0,0})\cup \tilde S_{0,0}$, then $$\lim_{\delta\to1} f_1(\delta)\ge 0\;\;\text{and}\;\; \lim_{\delta\to1}  f_2(\delta)\le0.$$
This implies that
$f_1(\delta)>0$ and $f_2(\delta)<0$ for any $\delta\in(0,1)$. It follows from Theorem \ref{clarifi2} that
semitrivial steady state $\left(0,\theta_{d_2,0,0,m_2}\right)$ is globally asymptotically stable for any $\gamma_1,\tau_1>0$.\\
{\bf Case (ii).} If  $(d_1,d_2)\in \tilde S_{-}\cup\left(\tilde S_{u,0}\setminus \tilde S_{0,0}\right)$, then
$$\lim_{\delta\to1} f_1(\delta)\ge0\;\;\text{and}\;\; \lim_{\delta\to1}  f_2(\delta)> 0.$$
Consequently, $f_1(\delta)>0$ for any $\delta\in(0,1)$, and there exists $\tilde \delta \in(0,1)$ such that $f_2(\tilde \delta)=0$,
$f_2(\delta)<0$ for $\delta\in(0,\tilde \delta)$ and $f_2(\delta)>0$ for $\delta\in(\tilde \delta, 1)$. It follows from Theorem \ref{clarifi2} that
the semitrivial steady state
$\left(0,\theta_{d_2,0,0,m_2}\right)$ is globally asymptotically stable for $\gamma_1\tau_1\ge-\ln \tilde\delta$, and for $0<\gamma_1\tau_1<-\ln \tilde\delta$, system \eqref{lvs00} has a unique positive steady state, which is globally asymptotically stable.\\
{\bf Case (iii).} If $(d_1,d_2)\in \tilde S_{u}$, then
$$\lim_{\delta\to1} f_1(\delta)<0\;\;\text{and}\;\; \lim_{\delta\to1}  f_2(\delta)> 0.$$
Consequently, there exist a unique $\delta_2\in(0,1)$ such that $f_1(\delta_2)=0$, and a unique $\delta_1\in (0,1)$ such that $f_2(\delta_1)=0$.
We claim that $\delta_1\le\delta_2$. If it is not true, then $\delta_2<\delta_1$ and $f_1(\delta),f_2(\delta)<0$ for $\delta\in(\delta_2,\delta_1)$, which implies that for the above given $d_1,d_2$,
$$\{(d_1,d_2,\tau_1,0,\gamma_1,0):-\ln \delta_1<\tau_1\gamma_1<-\ln\delta_2\}\subset S_u\cap S_v.$$
This contradicts with the fact
$$\left(S_u\cup S_{u,0}\setminus S_{0,0}\right)\cap\left(S_v\cup S_{v,0}\setminus S_{0,0}\right)=\emptyset.$$
Then if $\delta_1<\delta_2 $, we have $f_1(\delta)>0$ and $f_2(\delta)<0$ for $\delta\in(0,\delta_1)$, $f_1(\delta),f_2(\delta)>0$ for
$\delta\in (\delta_1,\delta_2)$, and $f_1(\delta)<0$ and $f_2(\delta)>0$ for $\delta\in(\delta_2,1)$. Moreover, if $\delta_1=\delta_2$, then
$f_1(\delta)>0$ and $f_2(\delta)<0$ for $\delta\in(0,\delta_1)$, $f_1(\delta)<0$ and $f_2(\delta)>0$ for $\delta\in(\delta_1,1)$, and $f_1(\delta)=f_2(\delta)=0$ for $\delta=\delta_1=\delta_2$. Therefore, $(iii_1)$ and $(iii_2)$ can be obtained directly from Theorem \ref{clarifi2}.
\end{proof}
\begin{remark}
We remark that some of sets $\tilde S_u$, $\tilde S_v$, $\tilde S_{-}$, $\tilde S_{u,0}$, $\tilde S_{v,0}$, $\tilde S_{0,0}$ may be empty for differently chosen parameters $b$ and $c$, and the exact description for these sets could be found in \cite[Theorem 1.4]{HeNi2016-1}.
\end{remark}
It follow from \cite[Theorem 1.3]{HeNi2016-1} that when $\tau_1=\gamma_1=0$, there may exist
four mutually disjoint regions of $(d_1,d_2)$ (see Eq. \eqref{td1d2}), where different global dynamics of model \eqref{lvs00} could occur. However, our results in Theorem \ref{ex2}
imply that
a large delay will lead to the extinction of species $u$ for any $d_1$ and $d_2$.

\subsection{Discussion}
In this subsection, we show briefly that the above method for model \eqref{lvs} can also be applied to the following model:
\begin{equation}\label{lvharm}
\begin{cases}
  \ds\frac{\partial U}{\partial t}=d_1\Delta U+U\left[m_1(x)-U-cV(x,t-\tau_2)\right], & x\in \Omega,\; t>0,\\
 \ds\frac{\partial V}{\partial t}=d_2\Delta V+V\left[m_2(x)-bU(x,t-\tau_1)-V\right], & x\in\Omega,\; t>0,\\
\ds\f{\partial U}{\partial n}=\ds\f{\partial V}{\partial n}=0,& x\in\partial \Omega,\ t>0,\\
U(x,t)=U_0(x,t)\ge0, \;\;x\in\Omega,\;t\in [-\tau_1,0],\\
V(x,t)=V_0(x,t)\ge0,\;\;x\in\Omega,\;t\in [-\tau_2,0].\\
\end{cases}
\end{equation}
The global dynamics and traveling waves of model \eqref{lvharm} were studied extensively for the homogeneous case (i.e., $m_1(x)$ and $m_2(x)$ are constant), see \cite{GourleyRuan2003,LinLi,LinLi2,LvWang} and references therein.
By virtue of the similar arguments as in the proof of Proposition \ref{monot}, we see that model \eqref{lvharm} also generates a monotone dynamical system.
\begin{proposition}\label{monot2}
Let $(U_i(x,t),V_i(x,t))$ be the corresponding solution of model \eqref{lvharm} with initial value $(U_{0,i},V_{0,i})$ for $i=1,2$. Assume that
\begin{equation*}
\begin{split}
&U_{0,1}\ge U_{0,2}\ge0 \;\;\text{for}\;\;x\in\overline\Omega,\;t\in[-\tau_1,0],\\
&0\le V_{0,1}\le V_{0,2}\;\;\text{for}\;\;x\in\overline\Omega,\;t\in[-\tau_2,0].
 \end{split}
 \end{equation*}
Then $$U_1(x,t)\ge U_2(x,t)\;\;\text{and}\;\; V_1(x,t)\le V_2(x,t)\;\;\text{for}\;\;x\in\overline\Omega,\;t\ge0.$$
\end{proposition}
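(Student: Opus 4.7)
The plan is to follow the template of Proposition \ref{monot}: set up difference variables tailored to the competitive structure, show that they satisfy a delayed cooperative differential inequality with nonnegative forcing, and then propagate nonnegativity forward by the method of steps. Concretely, I would define
\begin{equation*}
\overline U(x,t)=U_1(x,t)-U_2(x,t),\qquad \overline V(x,t)=V_2(x,t)-V_1(x,t),
\end{equation*}
so that the hypothesis $U_{0,1}\ge U_{0,2}$ and $V_{0,1}\le V_{0,2}$ translates into $\overline U,\overline V\ge 0$ on the respective initial time intervals. I would also set $\tilde\tau:=\min\{\tau_1,\tau_2\}$ as in Eq. \eqref{tildetau} and first treat the case $\tau_1,\tau_2>0$, the remaining cases being handled by minor modifications as at the end of the proof of Proposition \ref{monot}.

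The key algebraic step is to subtract the equations of \eqref{lvharm} for $i=1,2$, add and subtract cross terms of the form $cU_2V_1(x,t-\tau_2)$ and $bV_1U_2(x,t-\tau_1)$, and obtain
\begin{equation*}
\begin{cases}
\ds\frac{\partial \overline U}{\partial t}-d_1\Delta \overline U-\bigl[m_1(x)-U_1-U_2-cV_1(x,t-\tau_2)\bigr]\overline U
= cU_2\,\overline V(x,t-\tau_2), & x\in\Omega,\ t>0,\\[2pt]
\ds\frac{\partial \overline V}{\partial t}-d_2\Delta \overline V-\bigl[m_2(x)-V_1-V_2-bU_2(x,t-\tau_1)\bigr]\overline V
= bV_1\,\overline U(x,t-\tau_1), & x\in\Omega,\ t>0,\\[2pt]
\ds\f{\partial \overline U}{\partial n}=\ds\f{\partial \overline V}{\partial n}=0, & x\in\partial\Omega,\ t>0.
\end{cases}
\end{equation*}
Because $U_2,V_1\ge 0$, this is a delayed \emph{cooperative} system: the off-diagonal coupling coefficients $cU_2$ and $bV_1$ are nonnegative, while the sign of the diagonal zeroth-order term is irrelevant for the scalar parabolic comparison principle (absorb it via the usual exponential rescaling).

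On the interval $t\in[0,\tilde\tau]$ the delayed quantities $\overline U(x,t-\tau_1)$ and $\overline V(x,t-\tau_2)$ are determined by the initial data and hence are pointwise nonnegative; the right-hand sides of the two equations are therefore nonnegative forcing terms, and each equation reduces to a scalar parabolic inequality with Neumann boundary data and nonnegative initial data. The standard maximum/comparison principle then yields $\overline U,\overline V\ge 0$ on $\overline\Omega\times[0,\tilde\tau]$. I would then iterate this argument: having established nonnegativity on $[0,k\tilde\tau]$, the delayed terms on $[k\tilde\tau,(k+1)\tilde\tau]$ are again nonnegative, and the same comparison argument extends the inequality to $[0,(k+1)\tilde\tau]$, giving $\overline U,\overline V\ge 0$ for all $t\ge 0$. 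The cases where $\tau_1=0$ or $\tau_2=0$ are recovered by the same computation after absorbing the corresponding interaction term into the diagonal coefficient, exactly as in Proposition \ref{monot}.

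The only mildly delicate point is the bookkeeping in the algebraic rearrangement that produces a \emph{nonnegative} off-diagonal coupling: one must use $V_1$ (not $V_2$) as the factor in the added-subtracted cross term in the $\overline U$ equation and $U_2$ (not $U_1$) in the $\overline V$ equation, otherwise the sign of the forcing is reversed. Once this is set up correctly, everything else is a direct transcription of the argument already given for Proposition \ref{monot}.
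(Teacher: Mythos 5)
Your proposal is correct and follows exactly the route the paper intends: the paper omits the proof of Proposition \ref{monot2}, stating only that it goes ``by similar arguments as in the proof of Proposition \ref{monot},'' and your write-up is precisely that argument, with the algebraic rearrangement into a cooperative delayed system done correctly and the method of steps applied as in the earlier proof. Your closing caveat is actually unnecessary: the alternative bookkeeping (adding and subtracting $cU_1V_2(x,t-\tau_2)$, say) yields the forcing term $cU_1\,\overline V(x,t-\tau_2)$, which is also nonnegative, so either choice of cross term works.
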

Letting
$(u,v)$ be the positive steady state of system \eqref{lvharm}, and linearizing system \eqref{lvharm} at $(u,v)$, one could obtain the following eigenvalue problem
\begin{equation}\label{eigharm}
\begin{cases}
  \nu\phi_1 =d_1\Delta \phi_1 +m_1(x)\phi_1 -(2u+cv)\phi_1-cu\phi_2e^{-\nu\tau_2}, & x\in \Omega,\\
 \nu\phi_2=d_2\Delta \phi_2+ m_2(x)\phi_2-(bu+2v)\phi_2-bv\phi_1 e^{-\nu\tau_1}, & x\in\Omega,\\
\ds\f{\partial \phi_1}{\partial n}=\ds\f{\partial \phi_2}{\partial n}=0,& x\in\partial \Omega.\\
\end{cases}
\end{equation}
By virtue of the transforation $\psi_1=\phi_1$ and $\psi_2=-\phi_2$, eigenvalue problem \eqref{eigharm} is equivalent to
\begin{equation}\label{eigeharm}
\begin{cases}
  \nu\psi_1 =d_1\Delta \psi_1 +m_1(x)\psi_1 -(2u+cv)\psi_1+cu\psi_2e^{-\nu\tau_2}, & x\in \Omega,\\
 \nu\psi_2=d_2\Delta \psi_2+m_2(x)\psi_2-(bu+2v)\psi_2+bv\psi_1  e^{-\nu\tau_1}, & x\in\Omega,\\
\ds\f{\partial \psi_1}{\partial n}=\ds\f{\partial \psi_2}{\partial n}=0,& x\in\partial \Omega.\\
\end{cases}
\end{equation}
Denote by $\nu_1$ the principal eigenvalue of the following eigenvalue problem
\begin{equation}\label{eige0harm}
\begin{cases}
  \nu\psi_1 =d_1\Delta \psi_1 +m_1(x)\psi_1 -(2u+cv)\psi_1+cu\psi_2, & x\in \Omega,\\
 \nu\psi_2=d_2\Delta \psi_2+m_2(x)\psi_2-(bu+2v)\psi_2+bv\psi_1  , & x\in\Omega,\\
\ds\f{\partial \psi_1}{\partial n}=\ds\f{\partial \psi_2}{\partial n}=0,& x\in\partial \Omega.\\
\end{cases}
\end{equation}
Then we show that eigenvalue problem \eqref{eigharm} (or equivalently, \eqref{eigeharm}) has a principal eigenvalue $\tilde \nu_1$, which has the same sign as
$\nu_1$. \begin{proposition}\label{maiharm}
Assume that $m_i(x)$ satisfies assumption $\mathbf{(M^+)}$ for $i=1,2$, and $d_1,d_2>0$ and $\tau_1,\tau_2\ge0$.
Then there exists a principal eigenvalue $\tilde\nu_1$ of \eqref{eigeharm} associated with the eigenfunction $(\psi_1,\psi_2)>(0,0)$.
Furthermore, $\tilde \nu_1$ has the same sign as $\nu_1$, where $\nu_1$ is the principal eigenvalue of \eqref{eige0harm}, and
$$\tilde\nu_1=\sup\{{\mathcal R}e \nu:\nu \;\;\text{is an eigenvalue of}\;\;\eqref{eigeharm}\}.$$
\end{proposition}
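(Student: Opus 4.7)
The plan is to mirror the four-step argument of Theorem \ref{mai}: the only structural change is that the delays here sit in the off-diagonal coupling rather than in the on-diagonal reaction terms, and the rest of the semigroup/Krein-Rutman framework transfers without modification.

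First, I would set up the abstract delay equation. Define the bounded positive operator $L=(L_1,L_2):E\to Y\times Y$ by
\begin{equation*}
L_1(\phi_1,\phi_2)=cu\,\phi_2(-\tau_2),\qquad L_2(\phi_1,\phi_2)=bv\,\phi_1(-\tau_1),
\end{equation*}
and the closed diagonal operator $B=(B_1,B_2):\mathcal{D}(B)\subset Y\times Y\to Y\times Y$ by
\begin{equation*}
B_1\phi=d_1\Delta\phi+[m_1(x)-(2u+cv)]\phi,\quad B_2\phi=d_2\Delta\phi+[m_2(x)-(bu+2v)]\phi,
\end{equation*}
each with homogeneous Neumann boundary conditions. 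Since $B$ is diagonal with each block a scalar Neumann reaction-diffusion operator, it generates a compact, analytic, positive semigroup $T(t)$ on $Y\times Y$. Let $U(t):E\to E$ denote the solution semiflow of $V'(t)=BV(t)+LV_t$ with generator $A_U$; by the correspondence in \cite[Chapter 3]{wu1996theory}, $\nu\in\sigma_p(A_U)$ iff $\nu$ is an eigenvalue of \eqref{eigeharm}, with eigenfunction $(\psi_1e^{\nu\theta_1},\psi_2e^{\nu\theta_2})$ attached to an eigenfunction $(\psi_1,\psi_2)$ of \eqref{eigeharm}.

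Second, I would verify $U(t)(E^+)\subset E^+$. For $t\in(0,\tilde\tau)$ with $\tilde\tau=\min\{\tau_1,\tau_2\}$, the components of $V(x,t,\Psi_0)$ satisfy
\begin{equation*}
\begin{cases}
\ds\f{\partial v_1}{\partial t}-d_1\Delta v_1-[m_1-(2u+cv)]v_1=cu\,\psi_{0,2}(x,t-\tau_2)\ge 0,&x\in\Omega,\\
\ds\f{\partial v_2}{\partial t}-d_2\Delta v_2-[m_2-(bu+2v)]v_2=bv\,\psi_{0,1}(x,t-\tau_1)\ge 0,&x\in\Omega,
\end{cases}
\end{equation*}
with Neumann boundary conditions, so the comparison principle yields $v_1,v_2\ge 0$ on $[0,\tilde\tau]$; the method of steps extends this to $t\ge 0$, and the case $\tilde\tau=0$ is handled exactly as in the proof of Theorem \ref{mai}.

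Third, the main technical step is eventual strong positivity: there is $t_*>0$ with $U(t)(E^+\setminus\{\mathbf 0\})\subset\text{int}(E^+)$ for $t>t_*$. Given $\Psi_0\in E^+\setminus\{\mathbf 0\}$, assume without loss of generality $\psi_{0,1}\not\equiv 0$, so there exists $(x_0,\theta_0)\in\overline\Omega\times[-\tau_1,0]$ with $\psi_{0,1}(x_0,\theta_0)>0$. The source $bv(x)\psi_{0,1}(x,t-\tau_1)$ entering the $v_2$-equation is then nontrivial near $t^{*}=\tau_1+\theta_0$, and the minimum-at-zero contradiction used in the proof of Theorem \ref{mai} (applied here to $v_2$ in place of $v_1$) forces $v_2(\cdot,\overline t)\not\equiv 0$ for some $\overline t$ slightly exceeding $t^{*}$; the strong parabolic maximum principle then gives $v_2(x,t)>0$ for all $x\in\overline\Omega$ and $t>\overline t$. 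Feeding this back into the $v_1$-equation, for $t>\overline t+\tau_2$ the source $cu\,v_2(x,t-\tau_2)$ is strictly positive, so $v_1(x,t)>0$ on $\overline\Omega$. Hence $U(t)\Psi_0\in\text{int}(E^+)$ for $t>2\tau_1+2\tau_2$; the symmetric case $\psi_{0,2}\not\equiv 0$ proceeds analogously.

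Finally, compactness of $U(t)$ for $t>\tau_1+\tau_2$ (see \cite[Chapter 3]{wu1996theory}) together with eventual strong positivity allows the Krein-Rutman theorem to be applied to $U(t_0)$ for any fixed $t_0>2\tau_1+2\tau_2$: its spectral radius $r(U(t_0))=e^{\tilde\nu_1 t_0}$ is a simple dominant eigenvalue with strictly positive eigenfunction, which yields the principal eigenvalue $\tilde\nu_1$ of \eqref{eigeharm} with eigenfunction $(\psi_1,\psi_2)>(0,0)$ and $\tilde\nu_1=\sup\{{\mathcal R}e\,\nu:\nu\text{ is an eigenvalue of }\eqref{eigeharm}\}$. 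That $\tilde\nu_1$ has the same sign as $\nu_1$ follows from \cite[Section 4]{Kerscher1984}, since the operator $L_0$ defined by evaluating $L$ on constant histories is precisely the coupling block of \eqref{eige0harm}, so $s(B+L_0)=\nu_1$. The main obstacle is the eventual-strong-positivity step in paragraph three: because the delay now appears in the coupling, positivity of one component must propagate to the other across the time lag $\tau_1$ (or $\tau_2$), which is exactly what the minimum-at-zero contradiction is designed to secure.
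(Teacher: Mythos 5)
Your proof is correct and follows essentially the same route as the paper, which simply defines modified operators $\tilde L$, $\tilde B$ and invokes the argument of Theorem \ref{mai}; your only deviation is a cosmetic one, placing the undelayed terms $m_i(x)\psi_i(0)$ into $B$ rather than into $L$, which changes nothing since $B+L_0$ (and hence $s(B+L_0)=\nu_1$) is the same under either splitting. Your expanded treatment of eventual strong positivity, where positivity now propagates through the delayed off-diagonal coupling rather than the delayed diagonal term, is exactly the detail the paper leaves implicit and is handled correctly.
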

\begin{proof}
For the case that at least one of $\tau_1$ and $\tau$ are positive, we define $\tilde L=(\tilde L_1,\tilde L_2):E\to Y\times Y$ by
\begin{equation*}
\begin{split}
\tilde L_1=&m_1(x)\psi_1(0)+cu\psi_2(-\tau_2),\\
\tilde L_2=&m_2(x)\psi_2(0)+bv\psi_1(-\tau_1),\;\;(\psi_1,\psi_2)\in E,
\end{split}
\end{equation*}
and $\tilde B:=B$, where $B$ is an operator defined in \eqref{B}. Clearly, $\tilde L$ and $\tilde B$ have the same properties as $L$ and $B$, where $L$ is defined
in Eq. \eqref{L}. By the similar arguments as in the proof of Theorem \ref{mai}, we could obtain the desired results.
\end{proof}
Therefore, we see that delays are harmless for model \eqref{lvharm}.
\begin{proposition}
Assume that $m_i(x)$ satisfies assumption $\mathbf{(M^+)}$ for $i=1,2$, and $0<bc\le1$. Then the global dynamics of model
\eqref{lvharm} for $\tau_1,\tau_2>0$ is the same as that for $\tau_1=\tau_2=0$.
\end{proposition}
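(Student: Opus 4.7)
The plan is to mirror the proof of Theorem \ref{clarifi2} for model \eqref{lvharm}, using the monotonicity from Proposition \ref{monot2}, the principal eigenvalue characterization from Proposition \ref{maiharm}, and the theory of monotone dynamical systems (Hess, \cite[Proposition 9.1 and Theorem 9.2]{Hess}). The first key observation is that steady states of \eqref{lvharm} coincide with those of \eqref{lv}, because the delayed terms $U(x,t-\tau_1)$ and $V(x,t-\tau_2)$ reduce to the instantaneous values at a steady state. In particular, the two semitrivial steady states are exactly $(\theta_{d_1,m_1},0)$ and $(0,\theta_{d_2,m_2})$, and the classifying sets $S_u,S_v,S_{-},S_{u,0},S_{v,0},S_{0,0}$ (specialized to $\gamma_i=\tau_i=0$ in the definition of $\theta$) are unaffected by the delays.

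Next, I would check that the linear stability of the two semitrivial states is also delay-independent. Linearizing \eqref{lvharm} at $(\theta_{d_1,m_1},0)$, the $\phi_2$-equation reduces to $\nu\phi_2=d_2\Delta\phi_2+[m_2(x)-b\theta_{d_1,m_1}]\phi_2$, with no delay appearing because the factor multiplying $U(x,t-\tau_1)$ is the steady-state $V\equiv0$. Hence stability is determined by $\mu_1(d_2,m_2-b\theta_{d_1,m_1})$, exactly as in the non-delayed case, and the symmetric observation holds at $(0,\theta_{d_2,m_2})$. For any positive steady state, Proposition \ref{maiharm} gives a real principal eigenvalue $\tilde\nu_1$ of \eqref{eigeharm} whose sign equals that of $\nu_1$ for the non-delayed problem \eqref{eige0harm}; by the same argument used in Lemma \ref{posss} for the non-delayed Lotka--Volterra system (which relies only on $0<bc\le1$ and the structure of \eqref{eige0harm}), $\nu_1<0$, so every positive steady state of \eqref{lvharm} outside the degenerate set $S_{0,0}$ is linearly stable.

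With monotonicity (Proposition \ref{monot2}) and the above stability bookkeeping in hand, the dichotomy for strongly order-preserving semiflows applies directly. In each of the cases $(i)$--$(iii)$, one semitrivial state is stable and the other is unstable (or both are unstable and every positive steady state is stable), so \cite[Proposition 9.1 and Theorem 9.2]{Hess} yields either global asymptotic stability of the appropriate semitrivial equilibrium, or existence of a unique globally asymptotically stable positive steady state. The degenerate case $(iv)$, where $(d_1,d_2,\tau_1,\tau_2,0,0)\in S_{0,0}$, reproduces the continuum $\{(\rho\theta_{d_1,m_1},(1-\rho)\theta_{d_1,m_1}/c):\rho\in(0,1)\}$ of steady states inherited from the non-delayed analysis; monotonicity plus the order structure of this continuum gives the compact global attractor as in Lemma \ref{clarifi}(iv).

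The main technical subtlety I would expect lies in the degenerate $S_{0,0}$ case, since extending the global-attractor argument of \cite{HeNi2016-1} to a delay equation on the phase space $E$ requires verifying precompactness and asymptotic smoothness of the solution semiflow; this can be handled by standard parabolic smoothing together with the $L^\infty$ a priori bounds obtained from the comparison with the scalar logistic equation. All other components of the classification follow routinely once Propositions \ref{monot2} and \ref{maiharm} are available, so no delay-dependent threshold appears.
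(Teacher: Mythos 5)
Your proposal is correct and follows essentially the same route as the paper, which leaves this proposition without a detailed proof precisely because it is the immediate combination of Proposition \ref{monot2}, Proposition \ref{maiharm}, the delay-independence of the steady states and of the decoupled linearizations at the semitrivial states, and the monotone-systems argument of Theorem \ref{clarifi2}. Your write-up simply makes explicit the chain of reasoning the paper summarizes as ``delays are harmless,'' including the (correctly identified) observation that the delayed terms vanish in the linearization at the semitrivial equilibria because they are multiplied by the zero component.
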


\end{document}